\documentclass[11pt, leqno, a4paper,]{amsart}    

\usepackage[]{mdframed} 
\usepackage{dsfont}
\usepackage{amsfonts,amsmath,amsthm,amssymb,
amscd,latexsym
}  
\usepackage{hyperref} 
\usepackage[normalem]{ulem}
\usepackage{geometry}
\usepackage{todonotes} 
\usepackage{xcolor}
\usepackage{soul}
\usepackage{mathrsfs}
\newcommand{\R}{\mathbb{R}}

\usepackage{nicefrac}

\newtheorem{theorem}{Theorem}[section]
\newtheorem{lemma}[theorem]{Lemma}

\newtheorem{proposition}[theorem]{Proposition}

\theoremstyle{remark}
\newtheorem{remark}[theorem]{Remark}
 
\numberwithin{equation}{section}

\newcommand*{\bydef}{\overset{\rm def}{=}}
\newcommand*{\norm}[1]{\left\Vert #1\right\Vert}
\renewcommand*{\div}{\operatorname{div}}

\newcommand*{\id}{\operatorname{Id}}

\title[Navier--Stokes--Maxwell]{Optimal Time-Decay of global solutions to the Navier--Stokes--Maxwell system}

\author{Haroune Houamed}
\address{
Ko\c c University, Istanbul, Turkey}
\email{\href{mailto:hhouamed@ku.edu.tr}{hhouamed@ku.edu.tr}, \href{mailto:haroune.houamed@nyu.edu}{haroune.houamed@nyu.edu}
}

\author{Slim Ibrahim}
\address{University of Victoria \\
Canada} 
\email{\href{mailto:ibrahims@uvic.ca}{ibrahims@uvic.ca}}

\author{Belkacem Said--Houari}
\address{University of Sharjah \\
United Arab Emirates  } 
\email{\href{mailto:bhouari@sharjah.ac.ae }{bhouari@sharjah.ac.ae }}

\begin{document}
	 
	\begin{abstract}
We prove the existence and uniqueness of global-in-time solutions to the Navier--Stokes--Maxwell (NSM) system for small initial data in the critical Fujita--Kato space $\dot H^{\nicefrac{d}{2}-1}(\mathbb R^d)$, in any dimension $d\geq 3$.   Under the additional assumption that the initial data belong to the Besov space $\dot B^{-\nicefrac{d}{2}}_{2,\infty}(\mathbb R^d)$, we also establish the optimal decay rate $t^{-\nicefrac{s}{2}-\nicefrac{d}{4}}$ at infinity
of the solution in $\dot H^s(\mathbb R^d)$, for any $s\in\left(-\nicefrac{d}{2},\nicefrac{d}{2}-1\right]$. This is achieved by   constructing  a Lyapunov functional that is  equivalent to the pointwise energy  on  the Fourier  side, and by combining it with an adaptation of the Fourier splitting method to establish its optimal decay rate.

All the analysis carried out here -- from the global existence theory to the study of the large-time behavior -- is performed within a framework that is uniform with respect to the speed of light $c\in(0,\infty)$. In particular, in the non-relativistic limit $c\to\infty$, this allows us to recover  the same results   for the corresponding   limiting magnetohydrodynamic   system. 
\end{abstract}

\maketitle

\tableofcontents
\section{Introduction and state of the art}
 
In this paper, we are interested in the long-time behavior of solutions to the        incompressible Navier--Stokes--Maxwell  (NSM) equations
\begin{subequations}\label{Model}
\begin{equation}\label{Main_System}
	\begin{cases}
		\begin{aligned}
			\text{\tiny(Navier--Stokes's equation)}&&&\partial_t u +u \cdot\nabla u   = \nu \Delta u- \nabla p + j \times B, &\div u =0,&
			\\
			\text{\tiny(Amp\`ere's equation)}&&&\frac{1}{c} \partial_t E - \nabla \times B =- j , &
			\\
			\text{\tiny(Faraday's equation)}&&&\frac{1}{c} \partial_t B + \nabla \times E  = 0 , &\div B = 0,   & 
		\end{aligned}
	\end{cases}%\tag{NSM}
\end{equation}
supplemented with either of the following two versions of Ohm's law:
 \begin{equation}\label{Ohm:laws}
 	\begin{aligned}
 		\text{\tiny(general Ohm's law)}&&&j= \sigma \big( cE + u \times B\big),   
 		\\
 		\text{\tiny(solenoidal Ohm's law)}&&&j= \sigma \big( cE + \mathbb P(u \times B)\big), &\div j = \div E = 0,&
 	\end{aligned}
 \end{equation}
  and the initial condition 
 \begin{equation}
(u,E,B)|_{t=0} =  (u_0,E_0,B_0).
\end{equation}
\end{subequations}

 Here, $u$, $E$, $B$, and $j$ denote the fluid velocity, the electric field, the magnetic field, and the electric current density. The parameters $\nu$, $\sigma$, and $c$ are positive and represent the fluid viscosity, the electrical conductivity, and the speed of light, respectively. The system is naturally formulated in three spatial dimensions, where the cross product and curl operators are understood in their standard vectorial sense.

System \eqref{Model} not only inherits the fundamental analytical difficulties associated with the Navier- Stokes equations, but also exhibits additional challenges primarily due the nonlinear coupling of the velocity equation and the electromagnetic system through the Lorenz force $j\times B$ and also due to the hyperbolic nature of the Maxwell equations, which does not provide direct dissipation for all components of the electromagnetic field.

Our main objective is to develop a Fujita--Kato type theory for the NSM system, in the spirit of the classical well-posedness theory for the incompressible Navier--Stokes equations with initial data in the   homogeneous Sobolev space $\dot H^{\nicefrac{d}{2}-1} (\mathbb R^d)$. In particular, we establish existence and uniqueness results, together with a detailed description of the long-time behavior of solutions, in a critical functional framework adapted to the coupled fluid--electromagnetic system.

A central feature of our analysis is that all the estimates are uniform with respect to the speed of light $c$. This uniformity allows us to investigate the non-relativistic limit $c\to \infty$ and, in particular, to recover the corresponding well-posedness and long-time behavior results for the incompressible magnetohydrodynamic (MHD) system:
\begin{equation}\label{MHD} 
	\begin{cases}
		\begin{aligned}
			 &\partial_t u +u \cdot\nabla u   = \nu \Delta u- \nabla \left( p + \frac{1}{2} |B|^2\right) + B \cdot \nabla  B, &\div u =0,& 
			 \\
			 &  \partial_t B + u \cdot \nabla  B - \frac{1}{\sigma}\Delta B = B \cdot \nabla  u , &\div B =0.   & 
		\end{aligned}
	\end{cases}\tag{MHD}
\end{equation}

 Unlike the NSM formulation above, the MHD system does not rely on the three-dimensional cross product and can therefore be formulated in arbitrary spatial dimensions $d\geq 3$. Thus, although the electromagnetic NSM system is intrinsically associated with three-dimensional vector calculus, the functional-analytic framework underlying our analysis extends accordingly to the corresponding $d$-dimensional MHD equations. The precise sense in which the NSM system and the associated nonlinear terms can be understood beyond three spatial dimensions will be discussed later on.

The operator 
  $\mathbb P \bydef \text{Id} - \nabla \Delta^{-1} \div $, appearing the second version of Ohm's law \eqref{Ohm:laws}, denotes   Leray's projector on divergence-free vector fields. The divergence-free condition on the magnetic field is not an additional constraint that renders \eqref{Main_System} overdetermined. Rather, it is propagated by the evolution: taking the divergence of Faraday's equation  yields
\begin{equation*}
	\partial_t \div B=0,
\end{equation*}
so that $\div B=0$ is preserved by the flow whenever it is satisfied initially. Similarly for the solenoidal Ohm's law \eqref{Ohm:laws}, when $\div j=0$, Amp\`ere's equation propagates the condition $\div E=0$.

The Navier--Stokes--Maxwell system describes the evolution of a viscous, electrically conducting fluid coupled to an electromagnetic field. It arises naturally in the mathematical modeling of plasmas, charged gases, and conducting fluids, where the fluid is subject to the electromagnetic Lorentz force $j\times B$.  
We refer the interested reader to \cite{bis-book,D-book} for further details on the physical background of plasma modeling and to \cite{as, JLZ23} for a mathematical derivation of \eqref{Model} through the analysis of viscous incompressible hydrodynamic regimes arising from Vlasov--Maxwell--Boltzmann systems.
 
Before presenting our results, we briefly review the existing mathematical literature on the Navier--Stokes--Maxwell (NSM) system. Over the years, considerable progress has been made in understanding the well-posedness and long-time dynamics of this coupled fluid-electromagnetic model, using a variety of analytical techniques adapted to its mixed parabolic--hyperbolic structure. The following overview highlights the main milestones most relevant to the present work.  

\medskip
\noindent\textbf{The Two-dimensional case.} 
 The plasma model   \eqref{Model} has attracted increasing attention recently, starting with the work of Masmoudi \cite{MN}, who established the global existence and uniqueness of solutions for initial data $(u_0,E_0,B_0)$ in $L^2\times H^s\times H^s(\mathbb R^2)$, for any $s>0$. These solutions also satisfy the energy inequality
\begin{equation}\label{energy:L2}
	\frac{1}{2}\norm {(u,E,B)(t)}_{L^2(\mathbb R^d)}^2 +  \int_{0}^t \|( \sqrt{ \nu} \nabla u, \tfrac{j}{\sqrt{\sigma}} )(\tau)\|_{L^2(\mathbb R^d)}^2d\tau   \leq \frac{1}{2}\norm {(u_0,E_0,B_0)}_{L^2(\mathbb R^d)}^2 .
\end{equation}
At the energy level, this inequality is, to the best of our knowledge, essentially the only  known a priori information that follows directly from the intrinsic structure of the system. 

It is worth noting that establishing the existence of solutions in the Leray framework, namely, by taking $s=0$ in \cite{MN}, has  long been one of the central open problems for the Navier--Stokes--Maxwell system  \eqref{Model}. The main difficulty in adapting the classical compactness arguments developed for the Navier--Stokes equations stems from the lack of compactness of the Lorentz force $j\times B$ when relying solely on the  energy inequality, above.

In an attempt to overcome this difficulty, the authors of \cite{IK2011} established the  global well-posedness for sufficiently small initial data in
\begin{equation*}
\dot B^0_{2,1}\times L^2_{\mathrm{log}}\times L^2_{\mathrm{log}}(\mathbb R^2),
\end{equation*}
where the subscript ``$\mathrm{log}$'' refers to an additional logarithmic condition on the high-frequency components of the initial data. This additional regularity is propagated for positive times, under a suitable smallness condition on the initial data, and provides precisely the compactness needed to overcome the aforementioned difficulty associated with the Lorentz force.
 We point out that the Besov regularity assumption $\dot B^0_{2,1}(\mathbb R^2)$ was subsequently relaxed to the natural energy space $L^2(\mathbb R^2)$ in \cite{GIM14}.

Besides these developments, we refer to \cite{a19, ag20, AHB24} for other recent advances in the analysis of \eqref{Model} and its inhomogeneous version.

\medskip
\noindent\textbf{The Three-dimensional case.}  
When $d=3$, the energy inequality is far from being sufficient to develop a satisfactory well-posedness theory. The available well-posedness and ill-posedness results \cite{FK64} and \cite{X24}, respectively, for the Navier--Stokes equations (corresponding to the case $B=E=0$ in \eqref{Model}) suggest that the homogeneous Sobolev space $\dot H^{\nicefrac{1}{2}}(\mathbb R^3)$ is optimal for the well-posedness theory in the sense of Hadamard in the Sobolev spaces $\dot H^{s}(\mathbb R^3)$.
 In the context of a general small-data theory for \eqref{Model} at this critical regularity level, it was proved in \cite{IK2011} that, for $d=3$, sufficiently small initial data in
\begin{equation*}
\dot B^{\frac{d}{2}-1}_{2,1}\times \dot H^{\frac{d}{2}-1}\times \dot H^{\frac{d}{2}-1}(\mathbb R^d)
\end{equation*}
generate a unique global solution to \eqref{Main_System}. This result was subsequently improved in \cite{GIM14}, where the Besov regularity assumption on the velocity field was relaxed, yielding global well-posedness for sufficiently small initial data in
\begin{equation*}
\dot H^{\frac{d}{2}-1}\times \dot H^{\frac{d}{2}-1}\times \dot H^{\frac{d}{2}-1}(\mathbb R^d),
\end{equation*}
for $d=3$.

\medskip
\noindent\textbf{Asymptotic behaviors.}  
Another important feature of \eqref{Model} lies in its connection with the magnetohydrodynamic (MHD) system. Indeed, in the non-relativistic regime, when the speed of light tends to infinity (or, in physical terms, when the velocity of the conducting fluid is negligible compared with the speed of light), the full Navier--Stokes--Maxwell (NSM) system can be formally approximated by the simpler \eqref{MHD} system whose energy balance  is given by
\begin{equation}\label{Energy_MHD_weak}
\frac{1}{2} \|(u,B)(t)\|_{L^2}^2 
+\int_0^t \|\nabla (\sqrt{\nu} u,\tfrac{B}{\sqrt{\sigma}} )(\tau)\|_{L^2}^2
 \,d\tau
\leq
\frac{1}{2} \|(u_0,B_0)\|_{L^2}^2 .
\end{equation}
Unlike \eqref{energy:L2}, this energy bound is sufficient to produce Leray-type solutions for \eqref{MHD}.

The main difference between \eqref{Model} and \eqref{MHD} is that the former is only partially dissipative and retains a non-negligible dispersive component inherited from the Maxwell equations, whereas the latter is fully dissipative. Roughly speaking, this can formally be seen by comparing the respective energy  estimates \eqref{energy:L2} and \eqref{Energy_MHD_weak}. 

This fundamental difference makes the global analysis of \eqref{Model} considerably more challenging than that of \eqref{MHD}. Nevertheless, starting from the work by Ars\'enio and Gallagher \cite{ag20}, a significant progress has been made in the study of the singular non-relativistic limit $c\to\infty$, both in the viscous setting \cite{ahh24, AHB24, aim15} and in the inviscid setting \cite{ah2, ah, KKL26}. We also refer the interested reader to other related works on the derivation of the MHD system from a two-fluid NSM model \cite{aim15, as, GZ25, JLT20, PWQ22, WYY25, ZZ21}.

The close structural analogy between the velocity and magnetic-field equations in \eqref{MHD} makes it possible to extend several classical results concerning the long-time behavior of global solutions to the Navier--Stokes equations (corresponding to the case $B=0$) to the MHD setting. By contrast, the long-time dynamics of solutions to the full NSM system \eqref{Model} remain largely unexplored. To the best of our knowledge, the only result currently available that provides insight into this question is a consequence of the work of Ars\'enio, Hassainia, and the first author in three space dimensions under axisymmetric assumptions; see \cite[Theorem 1.3]{ahh24}.
 More precisely, it was shown    therein that the global axisymmetric solutions of \eqref{Model} converge uniformly in time, for $t\in\mathbb R_+$, as $c\to\infty$, toward the corresponding global solutions of the MHD system \eqref{MHD}. In particular, that result  shows that, for sufficiently large values of $c$, the long-time dynamics of solutions to \eqref{Model} and \eqref{MHD} remain closely related in the case of three-dimensional axisymmetric solutions.

\medskip
\noindent\textbf{Aim and contribution of the paper.}
In light of the above discussion, and inspired by both classical results \cite{S86} and more recent developments \cite{BS18, D26, HDA20} for related models, we are interested in the long-time behavior of solutions to \eqref{Model}. The main contributions of our analysis can be summarized in two steps.
First, in Theorem \ref{thm:critical:wp}, we establish an improved well-posedness result, valid in any dimension $d\geq 3$, for a generalization of the NSM system to higher dimensions (see the next section for a discussion of the extension of the NSM model to higher dimensions). Our approach relies on the optimal parabolic estimate
\begin{equation*}
\norm{e^{t\Delta}u_0}_{L^2(\mathbb R^+;\dot B^{\frac{d}{2}}_{2,1}(\mathbb R^d))}
\lesssim
\norm{u_0}_{\dot H^{\frac{d}{2}-1}(\mathbb R^d)},
\end{equation*}
recently established by Ars\'enio and the first author in \cite{ah}. This estimate allows us to control the velocity field associated with \eqref{Model} in
$L^2(\mathbb R^+;\dot B^{\nicefrac{d}{2}}_{2,1}(\mathbb R^d))$, rather than in the larger space $L^2(\mathbb R^+;\dot H^{\nicefrac{d}{2}} \cap  L^\infty(\mathbb R^d))$, as was previously obtained in \cite{GIM14}.

Second, and more importantly, we address the long-time behavior of solutions to \eqref{Model} in arbitrary dimensions $d\geq 3$. In Theorem \ref{thm:decay}, we establish the optimal decay rate for global solutions to \eqref{Model} in the framework of Fujita--Kato spaces. Our approach provides a simple way of analyzing the time evolution of global solutions. It is based on the construction of a suitable Lyapunov functional, which yields the desired decay rate in Sobolev spaces through an appropriate adaptation of the Fourier splitting method.

It is worth emphasizing that all the results established in our theorems hold uniformly for $c\gg1$. In particular, this uniformity allows us to recover the corresponding results for the limiting MHD model in the non-relativistic limit $c\to\infty$.

 We defer to Section \ref{sec:main} a more detailed discussion of the refined control of the velocity field, its role in the long-time analysis of \eqref{Model}, and the main ideas underlying our approach of  obtaining the optimal decay rate. 
  
\section{A natural extension to higher dimensions of NSM system}\label{section:extension}

The usual vectorial formulation of the Navier--Stokes--Maxwell system \eqref{Model} is specific to three dimensions because it involves the cross product and the curl operator. We give here an exterior-calculus formulation that makes \eqref{Model} meaningful in every dimension $d\geq 3$ and coincides with the classical system when $d=3$. The velocity and magnetic field are represented by 1-forms, whereas the electric field and current density are represented by 2-forms. Throughout this section, vectors and 1-forms are identified by the Euclidean metric. For a complete introduction and detailed discussion about differential forms used in this section, we refer to chapter 6 from the book \cite{AMR-book88}.

\subsection{Elementary tensor analysis}  For $k\in \{ 1,\dots , d\}$, we denote by $\Lambda^k(\mathbb R^d)$ the space of alternating $k$-tensors $T$ on $\mathbb R^d$, i.e.,  $T: (\mathbb R^d)^k \to \mathbb R$ 
such that 
\begin{equation*}
	T\big ( v_{\sigma(1)}, \dots,  v_{\sigma(k)} \big ) = \text{sign}(\sigma) T ( v_1, \dots,  v_d)  ,
\end{equation*}
for every permutation $\sigma\in S_k$, with $S_k$ being the symmetric group on $k$ elements, where 
\begin{equation*}
	\text{sign}(\sigma)= \begin{cases}
		1, \qquad \text{if } \sigma \text{ is obtained by an even number of swaps},
		\\
		-1 \qquad \text{if } \sigma \text{ is obtained by an odd number of swaps}.
	\end{cases}
\end{equation*}
In particular, 
\begin{equation*}
	\Lambda^2 (\mathbb R^d)= \{A= (A)_{1\leq i,j\leq d} : A_{ij}=- A_{ji} \} 
\end{equation*}
is the space of antisymmetric $2$-tensors. For $A,C\in \Lambda^2 (\mathbb R^d)$, we use the Euclidean inner product
\begin{equation*}
	A:C \bydef \frac{1}{2} \sum_{i,j=1}^d A_{ij} C_{ij},
\end{equation*}
and the associated norm 
\begin{equation*}
	|A|^2 \bydef A:A.
\end{equation*}
\subsection{Exterior and interior products}
For two vectors $a,b\in \mathbb R^d$, we define their exterior product  
$a\wedge b \in \Lambda^2 (\mathbb R^d)$
 by 
\begin{equation}\label{eq:wedge-product}
(a\wedge b)_{ij}
\bydef
a_i b_j-a_jb_i , \qquad 1\leq i,j
 \leq d,
\end{equation}
In particular, $a\wedge b  = -b\wedge a$. In dimension three, after fixing an orientation, the Hodge star map identifies this operation with the usual cross product:
\begin{equation*}
	* (a\wedge b)= (a\times b)^{\flat},
\end{equation*}  
where, for a given vector $v\in \mathbb R^d$, the object $v^{\flat } \in \Lambda(\mathbb R^d)$ represents its 1-form representation.  Accordingly, the operator   $ (a ,b ) \mapsto a\wedge b $   provides a canonical, dimension-independent replacement for the three-dimensional cross product. From now on, for simplicity, we will be using the identification of vectors with 1-forms by omitting the symbol ``$\flat$'', above. 

Unlike the usual cross product, the operator ``$\wedge$'' takes values in $\Lambda^2 (\mathbb R^d) $ instead of $\mathbb R^d$. Given this difference, we shall now consider the use of an interior product, or contraction. More precisely, for $A\in \Lambda^2 (\mathbb R^d)$ and $b\in \mathbb R^d$, we define 
\begin{equation*}
	\iota_b A \in \mathbb R^d
\end{equation*}
by
\begin{equation}\label{eq:interior-product}
(\iota_b A)_i
\bydef -\sum_{j=1}^d A_{ij}b_j.
\end{equation}
With the convention \eqref{eq:wedge-product} for the exterior product and the above normalization of the inner product on 2-forms, one has the algebraic identity
\begin{equation}\label{eq:wedge-contraction}
(\iota_b A)\cdot a=-
A:(a\wedge b),
\end{equation}
which follows from  the elementary computation
\begin{equation*}
	\begin{aligned}
		A:(a\wedge b)
			&= \frac{1}{2} \sum_{i,j=1}^d A_{ij}  (a_i b_j - a_j b_i)
			\\
			&=   \sum_{i,j=1}^d A_{ij}a_ib_j
			\\
			&= -(\iota_b A)\cdot a,
	\end{aligned}
\end{equation*}
where we have used the antisymmetry property of $A$.

Consequently, if $j$ is a $2$-form-valued current density, the natural analogue of the Lorentz force $j\times B $ is 
\begin{equation*}
	\mathbf L _d (j,B)\bydef  \iota _B j.
\end{equation*}
In view of this definition, we have that
\begin{equation}\label{eq:lorentz-cancellation}
\mathbf L_d(j,B)\cdot u= -
j:(u\wedge B).
\end{equation}

\subsection{Generalized curl and its adjoint} We next introduce the differential operators replacing the curl in higher dimensions. Let
\begin{equation*}
	B= \sum_{i=1}^d B_i dx_i
\end{equation*}
be the $1$-form associated with the vector field $B=(B_1,\dots, B_d)\in \mathbb R^d$. We define  the exterior derivative, by setting 
\begin{equation}\label{eq:generalized-curl}
(\textbf{d} B)_{ij} \bydef 
\partial_iB_j-\partial_jB_i, \qquad \text{for all } 1\leq i,j\leq d. 
\end{equation}
Thus, we have that 
\begin{equation*}
	\textbf{d} : \Lambda^1 (\mathbb R^d) \to \Lambda^2 (\mathbb R^d),
\end{equation*}
which will be interpreted as the extension of the curl operator in higher dimensions. The terminology ``curl'' is justified by the fact that, in dimension $d=3$, the Hodge star identifies the $2$-form $\mathbf dB$ with the vector field $\nabla \times B$.
The formal $L^2$-adjoint of $ \textbf{d}$ if then given by 
\begin{equation}\label{eq:generalized-curl-adjoint}
(\textbf{d}^*E)_i
=-\sum_{j=1}^d\partial_jE_{ji} =  \sum_{j=1}^d\partial_jE_{ij},
\end{equation}
for a $2$-form $E$. Hence, the defining adjoint relation gives
\begin{equation}\label{eq:curl-adjoint}
\int_{\mathbb R^d}
(\textbf{d}B):E  = 
\int_{\mathbb R^d}
B\cdot\textbf{d}^*E ,
\end{equation}
for sufficiently regular fields with suitable decay at infinity. 
Notice that  the operator $\textbf{d}$ satisfies the analogue of the classical identity
\begin{equation*}
	\div (\nabla \times B)=0,
\end{equation*}
which is the result of the identity 
\begin{equation*}
	 \mathbf d^2B=0.
\end{equation*}
Moreover, the Hodge Laplacian identity gives 
\begin{equation*}
	(\mathbf d^*\mathbf d +\mathbf d\mathbf d^*) B= - \Delta B,
\end{equation*}
with the Euclidian sign convention used here. Since 
\begin{equation*}
	\mathbf d^* B=- \div B
\end{equation*}
we obtain
\begin{equation}\label{eq:curl-laplacian}
\textbf{d}^*\textbf{d}B=
-\Delta B,
\qquad
\text{whenever }\operatorname{div}B=0.
\end{equation}
Thus, on divergence-free vector fields, the generalized curl retains the same elliptic structure as the classical curl in three dimensions.
\subsection{Reformation of the NSM system in higher dimensions}
We are now in a position to formulate the higher-dimensional version of the NSM system. The unknowns are
\begin{equation*}
	u,B : \mathbb R^+ \times \mathbb R^d \to \mathbb R^d
\end{equation*}
while 
\begin{equation*}
	E,j : \mathbb R^+ \times \mathbb R^d \to \Lambda^2(\mathbb R^d).
\end{equation*}
Accordingly, in any dimension $d\geq 3$, we consider the following replacement of \eqref{Main_System} 
\begin{equation}\label{eq:NSM-higher-dimensional}
\begin{cases}
\begin{aligned}
&&& \partial_tu+u\cdot\nabla u
=
\nu\Delta u-\nabla p+\iota_Bj,
&
\operatorname{div}u&=0,
\\
 &&&\displaystyle \frac1c\partial_tE-\textbf{d} B = -j, &
 \\
 &&& \displaystyle \frac1c\partial_tB+ \textbf{d} ^*E = 0, & \operatorname{div}B&=0,  \end{aligned} \end{cases} \end{equation}
 supplemented with either of the following two versions of Ohm's law:
 \begin{equation}\label{Ohm:laws:d}
 	\begin{aligned}
 		\text{\tiny(general Ohm's law)}&&&j= \sigma \big( cE + u  \wedge B\big),   
 		\\
 		\text{\tiny(solenoidal Ohm's law)}&&&j= \sigma \big( cE + \mathbb P_2(u \wedge B)\big), & \textbf{d}  j = \textbf{d}  E = 0,&
 	\end{aligned}
 \end{equation}
 where 
 \begin{equation*}
 	\mathbb P_2 \bydef \id + \textbf{d}^* \Delta^{-1} \textbf{d} .
 \end{equation*}
 
 The system \eqref{eq:NSM-higher-dimensional}-\eqref{Ohm:laws:d} is the natural differential-form formulation of the NSM equations in higher dimensions $d\geq3$. The first equation is the incompressible Navier--Stokes equation with the generalized Lorentz force $\iota_Bj$. The second and third equations are the generalized Amp\`ere and Faraday equations, respectively, with the curl replaced by the exterior derivative and its formal adjoint. Finally, the generalized Ohm law is obtained by replacing the three-dimensional vector product $u\times B$ by the exterior product $u\wedge B$.

  Let us verify that the basic energy structure is preserved. Taking the \(L^2\) inner products of the first, second, and third equations in \eqref{eq:NSM-higher-dimensional} with $u$, $E$, and $B $, respectively, we obtain 
 \begin{equation}\label{eq:energy-preliminary} \begin{aligned} \frac12\frac{d}{dt}\|u\|_{L^2}^2 +\nu\|\nabla u\|_{L^2}^2 &= \int_{\mathbb R^d} (\iota_Bj)\cdot u dx, 
 \\ \frac1{2c}\frac{d}{dt}\|E\|_{L^2}^2 - \int_{\mathbb R^d} (\mathbf d B):E\,dx &= -\int_{\mathbb R^d}j:E dx, 
 \\ \frac1{2c}\frac{d}{dt}\|B\|_{L^2}^2 + \int_{\mathbb R^d} B\cdot\mathbf d^*E\,dx &= 0. \end{aligned} \end{equation}
 
  By the adjoint relation \eqref{eq:curl-adjoint}, the two Maxwell coupling terms cancel. Furthermore, by \eqref{eq:wedge-contraction}, $ (\iota_Bj)\cdot u = - j:(u\wedge B)$.
  
  On the other hand, by Ohm's law, 
  \begin{equation*}
-j:E dx= - \frac{1}{\sigma c} |j|^2 + \frac{1}{c } j : (u \wedge B).
  \end{equation*}
Combining this identity with the Lorentz contribution in the first equation of \eqref{eq:energy-preliminary} yields the corresponding energy balance. With the normalization of the electromagnetic variables used in \eqref{eq:NSM-higher-dimensional}, one obtains
\begin{equation}\label{eq:higher-dimensional-energy}
\frac12\frac{d}{dt} 
\|(u,E,B)\|_{L^2}^2    +
\nu\|\nabla u\|_{L^2}^2 +
\frac1{\sigma }\|j\|_{L^2}^2= 0.
\end{equation}
 In particular, the essential algebraic cancellation between the Lorentz force and the nonlinear term in Ohm's law is preserved in arbitrary dimensions.

Once again, we emphasize on the relation with the classical formulation in dimension three. For a vector field $e$ corresponding to a $2$-form $E$ via the relation 
 \begin{equation*}
 	E=* e ^\flat,
 \end{equation*}
 it holds that 
 \begin{equation*}
 	\textbf{d} E=  (\div e) dx_1 \wedge dx_2 \wedge  dx_3
 \end{equation*}
 while 
 \begin{equation*}
 	\textbf{d}^*E=(\nabla \times e)^\flat. 
 \end{equation*}
 Accordingly, the divergence-free condition for $2$-forms is given by 
 \begin{equation*}
 	\textbf{d} E = 0,
 \end{equation*}
 which is the basis behind the definition of the solenoidal Ohm's law, above. 
 
 Now, fixing an orientation on $\mathbb R
^3$, the Hodge star operator provides an isometric identification
\begin{equation*}
	* : \Lambda^2 (\mathbb R^3) \to \Lambda ^1 (\mathbb R^3) \sim \mathbb R^3.
\end{equation*}
Under this identification, it holds that 
\begin{equation*}
	* (u\wedge B)\sim  u\times B 
\end{equation*}
and 
\begin{equation*}
	* (\textbf{d} B)\sim \nabla \times B.
\end{equation*}
Likewise, the contraction operation is related to the cross product by
\begin{equation*}
	\iota_B j \sim j\times B,
\end{equation*}
with the orientation and the corresponding Hodge identification fixed. Consequently, after identifying the 2-form-valued fields $E$ and $j$ with their Hodge dual vector fields, \eqref{eq:NSM-higher-dimensional} reduces to the usual three-dimensional NSM formulation.

All in all, this reformulation therefore provides a canonical extension of the NSM system to arbitrary spatial dimensions $d\geq 3$ without introducing a non-canonical $d$-dimensional vector cross product. The velocity and magnetic fields remain $\mathbb R^d$-valued vector fields, while the electric field and current density are naturally represented by antisymmetric $2$-tensor-valued fields. The three-dimensional operations 
\begin{equation}\label{vector:notation:3}
	u\times B, \qquad  j\times B, \qquad \nabla \times B
\end{equation}
 are replaced, respectively, by
\begin{equation}\label{vector:notation:d}
	u \wedge B, \qquad \iota_B j , \qquad \mathbf dB,
\end{equation}
while the remaining curl operation is represented by the formal adjoint $\mathbf d^*$.

\subsubsection*{\textbf{Conventional notation}} For convenience, and in order to keep the presentation of the proofs of our main results as simple as possible, we henceforth use the notation of \eqref{Model} for all spatial dimensions $d\geq3$, with the understanding that, when $d>3$, the system is interpreted through the higher-dimensional reformulation \eqref{eq:NSM-higher-dimensional}. Likewise, we continue to use the vectorial notation introduced in \eqref{vector:notation:3} in arbitrary dimensions $d\geq3$, instead of the more precise notation introduced in \eqref{vector:notation:d}. This abuse of notation is harmless for our purposes, since the algebraic and analytic properties of these objects that are used throughout our arguments are identical in the two formulations.

\section{Statement and discussion of the main results}\label{sec:main}

In the present work, we aim to develop a general framework for the Navier--Stokes--Maxwell system \eqref{Model} that allows us to investigate the long-time behavior of its solutions. This constitutes a first step toward a more refined analysis of questions such as asymptotic stability and the characterization of asymptotic profiles as $t\to\infty$.

Taking the Navier--Stokes equations as our guiding model, we formulate our analysis within the framework of Fujita--Kato solutions \cite{Fujita_1964}. Our first result is stated in this setting as follows:

\begin{theorem}\label{thm:critical:wp}
	Let $d\geq 3$.  
	There exists a sufficiently small $\delta>0$ such that for any initial data 
	 $(u_0, E_0, B_0)\in (\dot H^{\frac{d}{2}-1} (\mathbb R^d))^3$   satisfying the condition 
	\begin{equation*}
		\norm {(u_0,E_0,B_0)}_{\dot H^{\frac{d}{2}-1} (\mathbb {R}^d)} < \delta,
	\end{equation*}
	there exists a  unique global-in-time solution to the Navier--Stokes--Maxwell system \eqref{Model} enjoying the bound 
	\begin{equation*}
		\norm {(u,E,B)}_{C(\mathbb {R}^+; \dot H^{\frac{d}{2} -1})} + \norm {cE}_{L^2 (\mathbb {R}^+; \dot H^{\frac{d}{2} -1})} +  \norm {u}_{L^2 (\mathbb {R}^+; \dot B^{\frac{d}{2} }_{2,1})} \leq 2 \norm {(u_0,E_0,B_0)}_{\dot H^{\frac{d}{2}-1}  }.
	\end{equation*}

\end{theorem}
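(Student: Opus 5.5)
We will argue by a priori estimates combined with a standard approximation (Friedrichs regularization or an iterative scheme) and continuity argument, so the core is to close the bound of Theorem \ref{thm:critical:wp} for smooth solutions. Define
\begin{equation*}
X(t)\bydef \norm{(u,E,B)}_{L^\infty_t \dot H^{\frac d2-1}} + \norm{cE}_{L^2_t\dot H^{\frac d2-1}} + \norm{u}_{L^2_t \dot B^{\frac d2}_{2,1}} .
\end{equation*}
We aim to show $X(t)\le \norm{(u_0,E_0,B_0)}_{\dot H^{\frac d2-1}} + C X(t)^2$, with $C$ independent of $c$, and then conclude by the bootstrap since $\delta$ is small. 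Uniqueness follows from a similar estimate on the difference of two solutions, performed at a lower regularity level, say $\dot H^{\frac d2-2}$ for $d\ge4$; when $d=3$ we would use $L^2$ or an interpolated norm in which the trilinear terms still close.

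\textbf{Step 1: energy estimate in $\dot H^{\frac d2-1}$.} We apply $\dot\Delta_q$ or $|D|^{\frac d2-1}$ to the system and take inner products with $(u,E,B)$. As in \eqref{eq:higher-dimensional-energy}, the Maxwell coupling terms cancel. The Ohm's law term produces $\sigma c\norm{E}^2_{\dot H^{\frac d2-1}}$, which is the source of the $L^2_t$ control of $cE$ since $\sigma c\,\norm{E}^2 = \sigma c^{-1}\norm{cE}^2$. Up to normalizing $c$ so that the dissipation is $\sigma\norm{cE}^2$ after rescaling, we get the dissipation norms in the statement, uniformly in $c$. The nonlinear terms are:
\begin{itemize}
\item $u\cdot\nabla u$;
\item the Lorentz force $j\times B = \sigma cE\times B + \sigma(u\times B)\times B$;
\item the Ohm contribution $\sigma c\,\langle u\times B, E\rangle_{\dot H^{\frac d2-1}}$.
\end{itemize}

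\textbf{Step 2: the velocity in $L^2\dot B^{\frac d2}_{2,1}$.} The energy estimate only gives $u\in L^2\dot H^{\frac d2}$. To upgrade, we write Duhamel's formula $u=e^{\nu t\Delta}u_0+\int_0^t e^{\nu(t-s)\Delta}\mathbb P(\cdots)\,ds$. We invoke the optimal parabolic estimate of \cite{ah},
\begin{equation*}
\norm{e^{t\Delta}f}_{L^2\dot B^{\frac d2}_{2,1}}\lesssim\norm f_{\dot H^{\frac d2-1}},
\end{equation*}
and its inhomogeneous counterpart with forcing in $L^1\dot H^{\frac d2-1}$ or $L^2\dot H^{\frac d2-2}$.

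\textbf{Step 3: nonlinear estimates.} We use product laws in homogeneous Besov spaces. The space $\dot B^{\frac d2}_{2,1}\hookrightarrow L^\infty$ is an algebra-multiplier on $\dot H^{s}$ for $|s|<\frac d2$. The cross terms are handled as follows.
\begin{itemize}
\item $\sigma c\langle \dot\Delta_q(u\times B),\dot\Delta_q E\rangle$ is bounded by $\norm{u}_{\dot B^{\frac d2}_{2,1}}\norm{B}_{\dot H^{\frac d2-1}}\norm{cE}_{\dot H^{\frac d2-1}}$, with commutator-free paraproduct bounds. It is then integrable in time via Cauchy--Schwarz, giving $X^3$.
\item The Lorentz part $\sigma cE\times B\cdot u$ pairs similarly.
\item The cubic term $(u\times B)\times B$ is controlled by $\norm{u}_{L^2\dot B^{\frac d2}_{2,1}}\norm{B}^2_{L^\infty\dot H^{\frac d2-1}}$ times $\norm{u}_{L^2\dot H^{\frac d2}}$, after placing the forcing in $L^2\dot H^{\frac d2-2}$, which is legitimate since $d\ge3$ gives $\frac d2-2>-\frac d2$.
\end{itemize}

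\textbf{Main obstacle.} The key difficulty is that $B$ enjoys no dissipation and is only in $L^\infty_t\dot H^{\frac d2-1}$, which does not embed in $L^\infty$. Hence every product involving $B$ must place a full $L^\infty$-type norm on the partner. This is exactly why $u\in L^2\dot B^{\frac d2}_{2,1}$, rather than $L^2\dot H^{\frac d2}$, is needed: it gives $u\times B\in L^2\dot H^{\frac d2-1}$ without logarithmic loss. We expect the delicate point to be the low–high paraproduct $T_B u$ in $\dot H^{\frac d2-1}$, together with the $c$-uniformity of the $E\cdot(u\times B)$ pairing. For the latter we plan to keep the factor $c$ attached to $E$ throughout, so that all bounds involve only $cE\in L^2$.
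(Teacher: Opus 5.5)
Your overall architecture matches the paper's. You close a priori bounds by smallness. You control $u$ in $L^2_t\dot B^{\frac d2}_{2,1}$ through Duhamel and the sharp free estimate of \cite{ah}. You control $(E,B)$ through the damped Maxwell energy estimate, where $u\times B\in L^2_t\dot H^{\frac d2-1}$ is paired with $cE$. The gap is in Step 2, which is exactly where the new information ($u\in L^2_t\dot B^{\frac d2}_{2,1}$ rather than $L^2_t\dot H^{\frac d2}$) must come from.

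The inhomogeneous estimate you invoke with forcing in $L^2_t\dot H^{\frac d2-2}$ is false. Maximal regularity gains two derivatives, but it cannot turn $\ell^2$ summability over dyadic blocks into $\ell^1$. For instance, let $f=\mathbf 1_{[0,1]}(t)h(x)$ with $\widehat h$ supported in $\{|\xi|\geq1\}$ and $h\in\dot H^{\frac d2-2}\setminus\dot B^{\frac d2-2}_{2,1}$. For $t\in[\frac12,1]$ the Duhamel term equals $(-\Delta)^{-1}(\id-e^{t\Delta})h$, whose dyadic blocks dominate $(1-e^{-1/2})$ times those of $(-\Delta)^{-1}h$. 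So its $\dot B^{\frac d2}_{2,1}$ norm is infinite on a set of times of positive measure, although $\norm{f}_{L^2_t\dot H^{\frac d2-2}}=\norm{h}_{\dot H^{\frac d2-2}}<\infty$.

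Your other option, forcing in $L^1_t\dot H^{\frac d2-1}$, is valid but only covers $u\cdot\nabla u$. In $cE\times B$ and $(u\times B)\times B$ the only time-integrable factor is $cE$, resp.\ $u$, in $L^2_t$, while $B$ is only controlled in $L^\infty_t\dot H^{\frac d2-1}$. So these Lorentz terms cannot be placed in $L^1_t$. As written, nothing controls $\norm{u}_{L^2_t\dot B^{\frac d2}_{2,1}}$. Without it you lose $u\times B\in L^2_t\dot H^{\frac d2-1}$, on which your Steps 1 and 3 rely; you correctly note that $L^2_t\dot H^{\frac d2}$ alone does not suffice.

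The missing ingredient is the $\ell^1$ gain in products of two Sobolev functions, i.e.\ the first part of Lemma \ref{lemma:product}: for $s,\eta<\frac d2$ and $s+\eta>0$, $\norm{fg}_{\dot B^{s+\eta-\frac d2}_{2,1}}\lesssim\norm{f}_{\dot H^s}\norm{g}_{\dot H^\eta}$. For the paraproduct $T_fg$ one has $2^{j(s+\eta-\frac d2)}\norm{\dot\Delta_jT_fg}_{L^2}\lesssim\sum_{k\leq j-2}2^{-(j-k)(\frac d2-s)}a_kb_j$ with $(a_k),(b_j)\in\ell^2$, which is summable in $j$ by Cauchy--Schwarz; the remainder uses $s+\eta>0$. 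Take $s=\eta=\frac d2-1$, which is where $d\geq3$ enters. This gives $\norm{cE\times B}_{L^2_t\dot B^{\frac d2-2}_{2,1}}\lesssim\norm{cE}_{L^2_t\dot H^{\frac d2-1}}\norm{B}_{L^\infty_t\dot H^{\frac d2-1}}$ and $\norm{(u\times B)\times B}_{L^2_t\dot B^{\frac d2-2}_{2,1}}\lesssim\norm{u}_{L^2_t\dot B^{\frac d2}_{2,1}}\norm{B}_{L^\infty_t\dot H^{\frac d2-1}}^2$. Maximal regularity with the \emph{same} third index $q=1$ (Lemma \ref{Lemma_parab_2}) then yields the $L^2_t\dot B^{\frac d2}_{2,1}$ bound; this is \eqref{First_term_X_t}--\eqref{prodcut_Est_u} in the paper.

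With that correction the rest of your plan goes through. The paper does not need your coupled $\dot H^{\frac d2-1}$ energy estimate for $u$, since the Duhamel bound also gives $L^\infty_t\dot H^{\frac d2-1}$; it uses the energy method only for $(E,B)$. Your lower-regularity uniqueness argument is a legitimate alternative to the paper's difference estimate at the critical level.
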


Before  discussing our second result on the long time behavior, let us highlight some remarks about Theorem \ref{thm:critical:wp}.
 
\begin{remark}
A result similar to the preceding theorem was established in \cite{GIM14} in dimension $d=3$, under the same assumptions on the initial data. Theorem \ref{thm:critical:wp} above differs in that it provides the refined control $ L^2 (\mathbb {R}^+; \dot B^{\nicefrac{d}{2} }_{2,1} (\mathbb R^d))$ instead of the slightly weaker control in $ L^2 (\mathbb {R}^+; \dot H^{\nicefrac{d}{2} } \cap L^\infty (\mathbb R^d))$.
	This refined estimate is not only instrumental in closing the global estimates at the   Sobolev regularity level, but also plays a crucial role in propagating a lower-order regularity, more precisely the Besov regularity $\dot B^{-\nicefrac{d}{2}}_{2,\infty}(\mathbb R^d)$, which is essential for establishing the optimal decay rate of the global solutions. The precise propagation result for this Besov regularity is stated in Proposition \ref{Prop_negative_regularity}, below.
	\end{remark}

\begin{remark}
Notice that the solutions constructed in the first part of the previous theorem do not necessarily have finite energy. Moreover, the smallness condition imposed on the initial data depends only on the size of the critical norm $\dot H^{\nicefrac{d}{2}-1}$, which is critical with respect to the scaling of the velocity field and is consistent with the corresponding critical well-posedness theory for the Navier--Stokes equations in dimension $d$; see, for instance, \cite{Fujita_1964, X24}.
\end{remark}

\begin{remark}
It is readily seen that, provided the initial data remain uniformly bounded with respect to $c\in(0,\infty)$, the solutions constructed in Theorem \ref{thm:critical:wp} are also uniformly bounded with respect to $c$. This uniform control provides a natural framework for recovering the MHD system and its Fujita--Kato solutions in the non-relativistic limit $c\to\infty$. Moreover, it opens the way to studying strong convergence within the Fujita--Kato framework, building, for instance, on recent asymptotic analyses of the Euler--Maxwell and Navier--Stokes--Maxwell systems, such as those developed in \cite{ahh24, ah2, KKL26}.
\end{remark}

\begin{remark}
%%%%%%%%%%%
A recent work by Coiculescu and Palasek \cite{Coiculescu:2026aa} shows that the perturbative approach to the Navier--Stokes equations cannot, in general, be extended to arbitrarily large initial data in the critical scaling-invariant space $BMO^{-1}$. More precisely, they showed that for large initial data in $BMO^{-1}$  the Navier--Stokes equations admit multiple global solutions, thereby demonstrating the failure of uniqueness in this setting. Consequently, the smallness assumption in the celebrated Koch--Tataru theory \cite{KT01}   is not merely a technical assumption of the fixed-point argument, but is in fact essential for the perturbative global well-posedness theory in $BMO^{-1}$.
 Although a global well-posdeness phenomenon for large data  remains completely open in the Fujita--Kato critical space $\dot{H}^{\nicefrac{1}{2}}$, the possibility that such a result may fail cannot presently be ruled out. From this perspective, the smallness assumption in Theorem~\ref{thm:critical:wp} is somehow justified and appears to be a natural requirement for obtaining global well-posedness by perturbative methods.
%%%%%%%%%% 
\end{remark}  

With Theorem \ref{thm:critical:wp} at hand, a natural question is whether the global solutions constructed therein ``relax'' toward an equilibrium, which in the present setting is the zero solution, and whether this large-time asymptotic behavior occurs at an optimal rate. We answer both questions affirmatively in our second main result, stated below.
   \begin{theorem}\label{thm:decay}
   	Under the assumptions of Theorem \ref{thm:critical:wp}, supposing additionally that 
   	\begin{equation*}
   		(u_0,E_0,B_0)\in \dot B^{-\frac{d}{2}}_{2,\infty}(\mathbb R^d),
   	\end{equation*}
   	then the global solution of \eqref{Model} enjoys the bound
   	\begin{equation*}
   		\norm {(u,E,B)(t)}_{\dot H^s (\mathbb R^d)}  \leq C_0 (1+t)^{-\frac{s}{2} -\frac{d}{4}},
   	\end{equation*}
   	for all $t\geq 0$ and  $s\in (-\frac{d}{2},\frac{d}{2}-1]$, where $C_0>0$ depends only on the $\dot B^{-\frac{d}{2}}_{2,\infty} \cap \dot H^{\frac{d}{2}-1}$ norm of the initial data, uniformly with respect to the speed of light $c\in (0,\infty)$. 
   \end{theorem}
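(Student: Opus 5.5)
The plan follows the strategy announced in the abstract: propagate the low-frequency Besov bound, build a Lyapunov functional on the Fourier side that is equivalent to the pointwise energy, and run a Fourier splitting argument.

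\textbf{Step 1: propagate the negative regularity.} I would first show that the $\dot B^{-\frac d2}_{2,\infty}$ norm of $(u,E,B)$ stays bounded for all times, uniformly in $c$. This is the content of Proposition \ref{Prop_negative_regularity}, announced in the remarks following Theorem \ref{thm:critical:wp}. It is proved by localized energy estimates on each dyadic block $\dot\Delta_q$. The nonlinear terms $u\cdot\nabla u$, $j\times B$ and $u\times B$ are estimated in $\dot B^{-\frac d2}_{2,\infty}$ through the product law $\dot B^{\frac d2}_{2,1}\times \dot B^{-\frac d2}_{2,\infty}\to \dot B^{-\frac d2}_{2,\infty}$ and the embedding $L^1\hookrightarrow \dot B^{-\frac d2}_{2,\infty}$. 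The time integrability comes from $u\in L^2(\dot B^{\frac d2}_{2,1})$ and $cE\in L^2(\dot H^{\frac d2-1})$ in Theorem \ref{thm:critical:wp}, combined with Gr\"onwall's inequality.

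\textbf{Step 2: pointwise Lyapunov functional in frequency.} For fixed $\xi$, the linearized Maxwell part with Ohm's damping $\sigma c^2 E$ is a damped hyperbolic system. In it, $E$ is damped at rate $\sigma c^2$, while $B$ dissipates only through the coupling $c\,\mathbf dB$. I would define
\[
\mathcal L(\xi)=|\hat u|^2+|\hat E|^2+|\hat B|^2+\eta\,\theta(\xi)\,\mathrm{Re}\big(\hat E:\overline{\widehat{\mathbf dB}}\big)/|\xi|,
\]
with a weight of the form $\theta(\xi)=\frac{\min(1,\ldots)}{\ldots}$ built from $|\xi|$, $c$ and $\sigma$. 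The weight is chosen so that $\mathcal L\simeq|\hat u|^2+|\hat E|^2+|\hat B|^2$ uniformly in $c$. The aim is the differential inequality
\[
\tfrac{d}{dt}\mathcal L+\kappa\,\min\Big(|\xi|^2,\tfrac{c^2|\xi|^2}{\sigma c^2+ c^2|\xi|^2/(\sigma c^2)}\Big)\mathcal L\lesssim |\hat N||\hat U|.
\]
In the low frequencies this gives a heat-like rate $\sim|\xi|^2/\sigma$, which is exactly the MHD dissipation of $B$. In the high frequencies the rate is bounded below by a constant. The cross term is what transfers damping from $E$ to $B$. The hardest point is tuning $\theta$ so that all constants stay uniform for $c\in(0,\infty)$, including the small-$c$ regime.

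\textbf{Step 3: Fourier splitting for $s=\frac d2-1$.} I would multiply by $|\xi|^{d-2}$ and integrate. On the ball $|\xi|\le R(t)=\sqrt{k/(1+t)}$, use Step 1 to bound
\[
\int_{|\xi|\le R}|\xi|^{d-2}\mathcal L\,d\xi\lesssim R^{2d-2}\|U\|_{\dot B^{-d/2}_{2,\infty}}^2.
\]
Outside the ball, the dissipation dominates $R^2\mathcal L$. The nonlinear contribution is absorbed by smallness: the $u$-part by $\|u\|_{\dot B^{d/2}_{2,1}}$, and the $B$-part by the damping of $cE$, through the quantity $j-\sigma cE$. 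Multiplying by $(1+t)^{m}$ with $m$ large and integrating gives
\[
\|U(t)\|_{\dot H^{\frac d2-1}}^2\lesssim(1+t)^{-(d-1)}.
\]
A bootstrap may be needed to handle the nonlinearity at decaying levels.

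\textbf{Step 4: the full range of $s$.} For $-\frac d2<s<\frac d2-1$, interpolate between the uniform $\dot B^{-\frac d2}_{2,\infty}$ bound from Step 1 and the $\dot H^{\frac d2-1}$ decay from Step 3:
\[
\|U\|_{\dot H^s}\lesssim\|U\|_{\dot B^{-d/2}_{2,\infty}}^{1-\theta}\|U\|_{\dot H^{d/2-1}}^{\theta},\qquad s=-\tfrac d2(1-\theta)+(\tfrac d2-1)\theta .
\]
Since $\theta<1$, this interpolation is valid, and the exponents give exactly the rate $(1+t)^{-\frac s2-\frac d4}$. The constant is uniform in $c$ because every ingredient above is.
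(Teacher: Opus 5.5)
Your overall architecture matches the paper's. The paper uses Proposition \ref{Prop_negative_regularity} for the $\dot B^{-\frac d2}_{2,\infty}$ bound. It then builds a Fourier-side functional whose $E$--$\mathbf dB$ cross term transfers the damping of $cE$ to $B$: it takes $L_c=\frac\gamma2(1+|\xi|^2)|\widehat U|^2-\frac1c\mathrm{Re}\langle i\xi\times\widehat E,\widehat B\rangle$, a cross term of your type with coefficient of size $(\gamma c(1+|\xi|^2))^{-1}$ relative to $|\widehat U|^2$. It integrates against $|\xi|^{d-2}d\xi$, splits at radius $\sim(1+t)^{-1/2}$, and interpolates. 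Two steps of your plan, however, do not go through as written.

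\textbf{Small-$c$ uniformity.} The uniformity for small $c$ that you single out as the hardest point cannot be obtained by any choice of $\theta$, because it is false.

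Your own rate already shows this. At high frequencies it tends to $\sigma c^2$, not to a $c$-independent constant. So the splitting step only works for $t\gtrsim(\sigma c^2)^{-1}$.

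More directly, Faraday's law and the $c$-uniform bound of Theorem \ref{thm:critical:wp} give $\|B(T)-B_0\|_{\dot H^{\frac d2-2}}\lesssim cT\|E\|_{L^\infty_T\dot H^{\frac d2-1}}\lesssim cT\|U_0\|_{\dot H^{\frac d2-1}}$. Hence, for fixed $T$ and $B_0\neq0$, $\|B(T)\|_{\dot H^{\frac d2-2}}\to\|B_0\|_{\dot H^{\frac d2-2}}$ as $c\to0$. Once $T$ is large, this contradicts any $c$-independent bound $C_0(1+T)^{1-\frac d2}$ (the case $s=\frac d2-2$).

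The paper's argument in fact only covers $c\ge1$. It uses $|\xi\times\widehat E|^2\le|\xi|^2|c\widehat E|^2$ to reach \eqref{B_dissipation_nonlinear}. It also uses $|F_c|\le\frac1{4c}(1+|\xi|^2)|\widehat U|^2$ with a $c$-independent $\gamma$ to get \eqref{Equiv_E_k_L_k}. You should restrict to $c\ge1$ (or $c\ge c_0>0$). There the simple weight above already gives the uniform rate $\frac{|\xi|^2}{1+|\xi|^2}$, and no delicate tuning is needed.

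\textbf{Nonlinear terms.} These cannot simply be absorbed by smallness. After Young's inequality, the contributions of $u\times B$ are of the form $\|u\times B\|_{\dot H^{\frac d2-1}}^2\lesssim\|u\|_{\dot B^{\frac d2}_{2,1}}^2\|B\|_{\dot H^{\frac d2-1}}^2$. This involves two problematic factors:
\begin{itemize}
\item $\|u\|_{\dot B^{\frac d2}_{2,1}}$ is not controlled by the dissipation $\|u\|_{\dot H^{\frac d2}}$ available at this level, and it is only square integrable in time;
\item the dissipation of $\|B\|_{\dot H^{\frac d2-1}}$ degenerates like $|\xi|^2$ at low frequencies, which is exactly where the splitting acts.
\end{itemize}
The paper instead bounds all nonlinear contributions by $a(t)\mathcal L_c(t)$, with $a=\|cE\|_{\dot H^{\frac d2-1}}^2+\|u\|_{\dot B^{\frac d2}_{2,1}}^2\big(1+\|B\|_{\dot H^{\frac d2-1}}^2\big)\in L^1(\mathbb R^+)$ by Theorem \ref{thm:critical:wp}. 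The split inequality $\frac{d}{dt}\mathcal L_c+\frac{\eta}{1+t}\mathcal L_c\le a(t)\mathcal L_c+C(1+t)^{-d}$ then closes by Gr\"onwall after multiplying by $(1+t)^\eta$. The only cost is the factor $\exp(\int_0^\infty a)$, and no bootstrap on decaying norms is needed.

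\textbf{A smaller point on Step 1.} To put $j\times B$ in $L^1_t\dot B^{-\frac d2}_{2,\infty}$ through $L^1\hookrightarrow\dot B^{-\frac d2}_{2,\infty}$, you need $B\in L^2_{t,x}$. This does not follow from $u\in L^2\dot B^{\frac d2}_{2,1}$ and $cE\in L^2\dot H^{\frac d2-1}$. The paper gets it from the damped-wave form $c^{-2}\partial_{tt}B+\partial_tB-\Delta B=\nabla\times(u\times B)$ together with Lemma \ref{lemma:bootstrap}.
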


\begin{remark} In view of the classical sharp decay estimate \cite{BS18,S86}
\begin{equation*}
	\norm {e^{t\Delta} f_0}_{\dot H^{s}} \leq C(\norm {f_0}_{\dot{{H}} ^{s}\cap L^1 (\mathbb{R}^d)}) (1+t)^{- \frac{s}{2} -\frac{d}{4}}, \qquad t\geq 0,
\end{equation*}
 the decay rate of the global solutions obtained for the global solutions by our theorems is optimal. 
 Moreover, we do not require the initial data to be in $L^1(\R^d)$, which is replaced here by the weaker assumption $ u\in  \dot{{B}}_{2,\infty}^{-\nicefrac{d}{2}}(\mathbb{R}^d)$.  We point out that the use of  initial data in $\dot{{B}}_{2,\infty}^{-\nicefrac{d}{2} }(\mathbb{R}^d)$ or more generally in the spaces $\dot{{B}}_{2,\infty}^{-\eta }(\mathbb{R}^d) $, for $\eta \in (0, \nicefrac{d}{2}]$, was first introduced by Sohinger and Strain \cite{Sohinger_Strain_2014} in the context of the Boltzmann equation, and by Xu and Kawashima \cite{Xu_Kawashima_2015} for partially dissipative hyperbolic systems. We also emphasize that, in contrast to the decay result of \cite{Xu_Kawashima_2015}, which relies on a smallness assumption on the initial data in $B^{-\nicefrac{d}{2}}_{2,\infty}(\R^d)$; our large-time behavior result established in Theorem \ref{thm:decay} does not require any smallness assumption on the initial data in   $B^{-\nicefrac{d}{2}}_{2,\infty}(\R^d)$, nor in any other subcritical space. 
\end{remark}

\medskip

\noindent\textbf{Strategy of proof and limitations.}
The proof of the global existence result, namely Theorem \ref{thm:critical:wp}, is based on a compactness  
argument combined with a priori estimates that exploit the optimal parabolic smoothing effect for the velocity field. More precisely, starting from initial velocity data $u_0\in\dot H^{\nicefrac d2-1}(\mathbb R^d)$, we establish the bound
\begin{equation*}
	u \in L^2(\mathbb R^+; \dot B^\frac{d}{2}_{2,1} (\mathbb R^d))
\end{equation*}  rather than the weaker control
\begin{equation*}
	u \in L^2(\mathbb R^+; \dot H^\frac{d}{2}\cap L^\infty (\mathbb R^d)).
\end{equation*} 
This stronger bound is crucial for propagating a broad class of natural regularity assumptions on the initial electromagnetic field. It also plays an essential role in controlling the nonlinear terms arising in the analysis of the large-time behavior.
 
 The proof of Theorem \ref{thm:decay}, on the other hand, hinges on constructing a new yet simple Lyapunov functional, defined by 
 \begin{equation*}
 	\frac{L_c(t,\xi)}{ 1+ |\xi|^2 } \bydef \frac{\gamma}{2}  \left |(\widehat u, \widehat E, \widehat B)(t,\xi)\right |^2+ \frac{1}{1+|\xi|^2}F_c(t,\xi) ,
 \end{equation*} 
where $\xi$ denotes the Fourier variable, $\gamma\gg 1$ and  the correction term $F_c$ is given by
 \begin{equation*}
 	F_c(t,\xi) \bydef -\frac{1}{c} \mathrm{Re}\,\langle i\xi\times \widehat{E},\widehat{B}\rangle .
 \end{equation*}
 We do not require any advanced spectral analysis of the propagator corresponding to the linear heat-Maxwell semi-group. Instead, we carry out the large-time analysis using the above Lyapunov functional. By choosing the parameter $\gamma\gg1$ appropriately, this functional captures, up to pointwise equivalence, the main information contained in the energy of the Fourier components of the solution.
 More precisely, we show that the averaged Lyapunov functionals
 \begin{equation*}
	\mathcal{L}_c(t) \bydef \int_{\mathbb R^d} \left ( \frac{L_c(t,\xi)}{ 1+ |\xi|^2 } \right) |\xi|^{d-2}d\xi , \qquad\mathcal{D}_c (t) \bydef \int_{\mathbb R^d} \left ( \frac{L_c(t,\xi)}{ 1+ |\xi|^2 } \right) \left( \frac{|\xi|^2}{1+|\xi|^2} \right) |\xi|^{d-2}d\xi ,
\end{equation*} 
satisfy the differential inequality 
\begin{equation*}
 \begin{aligned}
 	\frac{d}{dt} \mathcal{L}_c(t) + b_0\mathcal{D}_c(t) 
 	&\lesssim   a(t)  \mathcal{L}_c(t),
 \end{aligned}
\end{equation*}
for some constant $b_0>0$ and a suitable function $a\in L^1(\mathbb R^+)$ whose norm depends only on the size of the initial data in $\dot B^{-\nicefrac{d}{2}}_{2,\infty}\cap \dot H^{\nicefrac{d}{2}-1}(\mathbb R^d)$. Since
\begin{equation*}
	\mathcal{L}_c(t) \sim \norm {(u,E,B)(t)}_{\dot H^{\frac{d}{2}-1}(\mathbb R^d)} \qquad \text{ and } \qquad 
	\mathcal{D}_c(t) \sim \norm { \mathcal A(u,E,B)(t)}_{\dot H^{\frac{d}{2}-1}(\mathbb R^d)},
\end{equation*}
where  
\begin{equation*}
	\mathcal A \bydef \sqrt{-\Delta (\id - \Delta)^{-1}},
\end{equation*}
we adapt the core ideas of the Fourier splitting method, originally introduced by M. Schonbek \cite{S86} (see also \cite{BS18}), to establish the optimal decay rate from the above differential inequality. Finally, the pointwise equivalence between the Lyapunov functional and the squared modulus of the Fourier components of the solution allows us to transfer this decay estimates back to the Sobolev norms of the solution. 

The only limitation of our unified proof strategy, which applies uniformly in all dimensions $d\geq 3$ and for all values of $c\in(0,\infty)$, formally including the limiting case $c=\infty$, is that the product laws underlying our analysis break down in dimension $d=2$. Notice that, when $d=2$, the Fujita--Kato theory based on the critical space $\dot H^{\nicefrac d2-1}(\mathbb R^d)$ for the initial velocity coincides with the Leray theory for the Navier--Stokes equations, which is based on $L^2(\mathbb R^d)$ initial data. Our analysis therefore does not cover the two-dimensional case, in line with the longstanding open problem of establishing Leray-type global weak solutions for the Navier--Stokes--Maxwell system  \eqref{Model}.

 \section{Preliminaries}\label{Section_Prel} 

\subsection{Notation}
Throughout the paper, the letter $C$ denotes a generic positive constant that is independent of time and the speed of light and may vary from one occurrence to another. We write $f\lesssim g$ (resp. $f\gtrsim g$) whenever there exists a constant $C>0$, independent of the parameters of interest, such that $f\leq Cg$ (resp. $f\geq Cg$).

We often omit the spatial and temporal domains when writing norms. For instance, $\|\cdot\|_{L^p_T L^q}$ denotes the norm in $L^p(0,T;L^q(\R^d))$. We generally retain the subscript $T$ to emphasize that the Lebesgue time-integration/supremum is performed over the interval $[0,T)$.

For two vector fields $v$ and $w$ in $\mathbb C^d$, with respective components $v_1,\dots v_d$ and $w_1,\dots ,w_d$, we use the notation 
\begin{equation*}
	\left\langle v, w\right\rangle \bydef \left ( \sum _{i=1}^d v_i \overline w _i \right )^\frac{1}{2}
\end{equation*}
for the scalar product of $v$ by $w$.  Accordingly, we define the modulus of all vector fields used throughout the paper.

\subsection{Functional framework}We introduce in this section the functional spaces used throughout the paper. We also recall some product laws that will be repeatedly used in our analysis.

Our analysis is conducted in the framework of homogeneous Sobolev and Besov spaces, and use the standard definitions of these function spaces. In particular, for $s\in\mathbb R$ and $1\leq p,r\leq\infty$, we define
$\dot H^s(\R^d)=\dot B^s_{2,2}(\R^d)$ and equip $\dot B^s_{p,r}(\R^d)$ with the  (semi-)norm
\begin{equation*}
	\|f\|_{\dot B^s_{p,r}(\mathbb R^d)}\bydef\|2^{js} \dot\Delta_jf \|_{\ell^r(\mathbb Z; L^p(\mathbb R^d))},
\end{equation*}
where $(\dot\Delta_j)_{j\in\mathbb Z}$ denotes the homogeneous Littlewood--Paley decomposition. We refer the reader to \cite{bcd11} for the precise definitions and standard properties of these spaces.

 Throughout the paper, we will repeatedly use different versions of product laws to estimate the nonlinear terms arising in the equations under consideration. In the following lemma, we collect most of the product laws that will be needed in our analysis. We omit the details of their proofs and refer instead to \cite{bcd11} for complete references and more detailed discussions.

\begin{lemma}\label{lemma:product} In any dimension $d\geq 1$, for any smooth functions $f$ and $g$, it holds that 
	\begin{equation*}
		\norm {fg}_{\dot B^{s+\eta -\frac{d}{2}}_{2,1} (\mathbb R^d)} \lesssim \norm {f}_{\dot H^{s }(\mathbb R^d) }   \norm {g}_{\dot H^{\eta }(\mathbb R^d) }  ,
	\end{equation*}
	for any real parameters $s,\eta $ with  $s+\eta>0$, $ s< \frac{d}{2}$ and $ \eta< \frac{d}{2}$. 
	
	Moreover, we have, in the case $s=\frac{d}{2}$, that  
		\begin{equation*}
		\norm {fg}_{\dot B^{\eta  }_{2,q} (\mathbb R^d)} \lesssim \norm {f}_{ \dot B^{\frac{d}{2} }_{2,1}(\mathbb R^d) }   \norm {g}_{\dot B^{\eta  }_{2,q}(\mathbb R^d) }  ,
	\end{equation*} 
	for any $\eta \in (-\frac{d}{2}, \frac{d}{2})$, and any $q\in [1,\infty]$.
	
	Finally, for the case $s+\eta=0 $ in the above, one has that 
	\begin{equation*}
		\norm {fg}_{\dot B^{-\frac{d}{2} }_{2,\infty} (\mathbb R^d)} \lesssim \norm {f}_{ \dot B^{s }_{2,p}(\mathbb R^d) }   \norm {g}_{\dot B^{-s  }_{2,p'}(\mathbb R^d) }  ,
	\end{equation*} 
	for any $s<\frac{d}{2}$ and $p\in [1,\infty]$, where $p'$ is the conjugate exponent of $p$. 
 \end{lemma}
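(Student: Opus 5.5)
These are the standard product laws in homogeneous Besov spaces. I would prove all three estimates with Bony's paraproduct decomposition
\begin{equation*}
fg = T_f g + T_g f + R(f,g),
\end{equation*}
where $T_f g=\sum_j \dot S_{j-1}f\,\dot\Delta_j g$ and $R(f,g)=\sum_{|j-k|\leq 1}\dot\Delta_j f\,\dot\Delta_k g$. Throughout I use Bernstein's inequality: for a function with Fourier support in a ball of radius $2^k$, passing from $L^2$ to $L^\infty$ costs $2^{kd/2}$, and passing from $L^1$ to $L^2$ costs the same factor.

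\emph{First estimate.} Take $s,\eta<\frac d2$ with $s+\eta>0$.
\begin{itemize}
\item For $T_f g$, Bernstein gives $\|\dot S_{j-1}f\|_{L^\infty}\lesssim\sum_{k\le j-2}2^{k(d/2-s)}\,2^{ks}\|\dot\Delta_k f\|_{L^2}$. Since $s<\frac d2$, this is a convolution of the sequence $(2^{ks}\|\dot\Delta_k f\|_{L^2})_k\in\ell^2$ with the kernel $2^{m(d/2-s)}\mathbf 1_{m\le 0}$, which lies in $\ell^1\cap\ell^2$. Hence $2^{j(s-d/2)}\|\dot S_{j-1}f\|_{L^\infty}$ belongs to $\ell^2$.
\item Multiplying by $2^{j\eta}\|\dot\Delta_j g\|_{L^2}\in\ell^2$, the Cauchy--Schwarz inequality gives an $\ell^1$ sequence. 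Because each block $\dot S_{j-1}f\,\dot\Delta_j g$ is spectrally localized in an annulus $\sim2^j$, this yields $T_fg\in\dot B^{s+\eta-d/2}_{2,1}$.
\item The term $T_g f$ is symmetric, using $\eta<\frac d2$.
\item For $R(f,g)$, each block is supported in a ball of radius $\sim 2^j$. I estimate it in $L^1$ by Cauchy--Schwarz and then use Bernstein ($L^1\to L^2$ costs $2^{qd/2}$ on a ball of size $2^q$). This gives $\|\dot\Delta_q R\|_{L^2}\lesssim 2^{qd/2}\sum_{j\ge q-3}\|\dot\Delta_j f\|_{L^2}\|\dot\Delta_j g\|_{L^2}$.
\item Multiplying by $2^{q(s+\eta-d/2)}$ leaves $\sum_{j\ge q-3}2^{(q-j)(s+\eta)}c_j$, where $c_j\in\ell^1$ is the product of two $\ell^2$ sequences. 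Since $s+\eta>0$, this is summable in $q$.
\end{itemize}

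\emph{Second estimate.} Here $s=\frac d2$. Use $\|\dot S_{j-1}f\|_{L^\infty}\le \|f\|_{L^\infty}\lesssim\|f\|_{\dot B^{d/2}_{2,1}}$, so $T_fg$ is bounded in $\dot B^\eta_{2,q}$ for every $\eta$.
\begin{itemize}
\item For $T_gf$ I need $\eta<\frac d2$. Bernstein gives $2^{j(\eta-d/2)}\|\dot S_{j-1}g\|_{L^\infty}\lesssim\|g\|_{\dot B^\eta_{2,q}}$ in $\ell^q$ (and in $\ell^\infty$). Pairing this with $2^{jd/2}\|\dot\Delta_j f\|_{L^2}\in\ell^1$ gives the bound.
\item For the remainder, as above, $2^{q\eta}\|\dot\Delta_qR\|_{L^2}\lesssim\sum_{j\ge q-3}2^{(q-j)(\eta+d/2)}\,2^{jd/2}\|\dot\Delta_jf\|_{L^2}\,2^{j\eta}\|\dot\Delta_jg\|_{L^2}$. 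The condition $\eta>-\frac d2$ makes the kernel summable. Young's inequality for series then yields the $\ell^q$ norm.
\end{itemize}

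\emph{Third estimate (endpoint $s+\eta=0$).} This is where the main care is needed, since the remainder no longer gains summability. I will only aim for $\ell^\infty$.
\begin{itemize}
\item For the remainder, $2^{-qd/2}\|\dot\Delta_qR\|_{L^2}\lesssim\sum_j\|\dot\Delta_jf\|_{L^2}\|\dot\Delta_jg\|_{L^2}\le\|f\|_{\dot B^s_{2,p}}\|g\|_{\dot B^{-s}_{2,p'}}$ by Hölder in $\ell^p\times\ell^{p'}$.
\item For $T_fg$, since $s<\frac d2$, $2^{j(s-d/2)}\|\dot S_{j-1}f\|_{L^\infty}\lesssim\sup_k 2^{ks}\|\dot\Delta_kf\|_{L^2}$. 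Hence $2^{-jd/2}\|\dot S_{j-1}f\,\dot\Delta_jg\|_{L^2}\lesssim\|f\|_{\dot B^s_{2,\infty}}\,2^{-js}\|\dot\Delta_jg\|_{L^2}$, which is bounded in $\ell^\infty$.
\item The term $T_gf$ is the delicate one, because $g$ has regularity $-s$, which may exceed $\frac d2$. I would bound $\|\dot S_{j-1}g\|_{L^\infty}\le\sum_{k\le j-2}2^{kd/2}\|\dot\Delta_kg\|_{L^2}$ and then estimate $2^{-jd/2}\|\dot S_{j-1}g\,\dot\Delta_jf\|_{L^2}$ by $\sum_{k\le j}2^{(k-j)(d/2+s)}\,2^{-ks}\|\dot\Delta_kg\|_{L^2}\,2^{js}\|\dot\Delta_jf\|_{L^2}$.
\item Because $k\le j$ and $s<\frac d2$, and assuming $\frac d2+s\ge0$ (otherwise I would instead swap the roles via the condition $-s<\frac d2$ already covered), the factor $2^{(k-j)(d/2+s)}\le1$. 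Hölder in $k$ with $\ell^p/\ell^{p'}$ then closes the bound, possibly after using the embedding $\ell^{p'}\subset\ell^\infty$.
\end{itemize}

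I expect this last bookkeeping on the admissible sign range of $s$ to be the only genuine obstacle.
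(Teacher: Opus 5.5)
Your treatment of the first two estimates is correct. It is the same Bony-decomposition argument that the paper delegates to the paraproduct theorems of Bahouri--Chemin--Danchin.

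\textbf{The gap: the term $T_gf$ in the third estimate.} This gap cannot be closed, because the third estimate is false on part of the stated range $s<\frac{d}{2}$.

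\emph{The case $s<-\frac{d}{2}$.} Your fallback here is circular. The condition $\frac{d}{2}+s<0$ says exactly $-s>\frac{d}{2}$, so after swapping roles the factor measured in $\dot B^{-s}_{2,p'}$ violates the very condition you want to invoke. No argument can succeed in this range. Take $g$ Schwartz with $\widehat g$ supported in a fixed annulus, and $f=e^{i2^Nx_1}\phi$ with $\widehat\phi$ supported in a small ball. Then $fg$ is spectrally localized where $|\xi|\sim 2^N$, so $\|fg\|_{\dot B^{-d/2}_{2,\infty}}\gtrsim 2^{-Nd/2}\|\phi g\|_{L^2}$. The right-hand side, however, is of order $2^{Ns}$. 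The inequality therefore fails as $N\to\infty$ whenever $s<-\frac{d}{2}$.

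\emph{The case $s=-\frac{d}{2}$.} Your step ``the factor is $\le 1$, then H\"older in $k$'' also breaks down. H\"older in $k$ is not available, since the $f$-factor sits at the fixed index $j$. What remains to control is $\sup_j\sum_{k\le j-2}2^{kd/2}\|\dot\Delta_k g\|_{L^2}$, which is bounded only by $\|g\|_{\dot B^{d/2}_{2,1}}$, i.e.\ only when $p=\infty$. For $p<\infty$ the estimate is again false. Take $\widehat\psi\geq 0$ supported in an annulus and $g=\sum_{k=1}^M k^{-1}\psi(2^{-k}\cdot)$. Then $\|g\|_{\dot B^{d/2}_{2,p'}}\lesssim\|(k^{-1})_{k\geq 1}\|_{\ell^{p'}}$ stays bounded as $M\to\infty$, while $g=\psi(0)\log M+O(1)$ on the unit ball. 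The same $f$ as above then gives a ratio of order $\log M$.

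\textbf{What your computation actually proves.} Once the last step is done correctly, you get the third estimate for $-\frac{d}{2}<s<\frac{d}{2}$ and every $p\in[1,\infty]$. Bound $2^{-ks}\|\dot\Delta_k g\|_{L^2}$ and $2^{js}\|\dot\Delta_j f\|_{L^2}$ by their $\ell^\infty$ norms, which are dominated by the $\ell^{p'}$ and $\ell^{p}$ norms. Then sum the geometric series $\sum_{k\le j-2}2^{(k-j)(\frac{d}{2}+s)}\lesssim 1$. This needs $\frac{d}{2}+s>0$ strictly; a factor merely $\le 1$ is not enough. At $s=-\frac{d}{2}$ only $p=\infty$ works.

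That endpoint, $\dot B^{-d/2}_{2,\infty}\cdot\dot B^{d/2}_{2,1}\hookrightarrow\dot B^{-d/2}_{2,\infty}$, is in fact the only instance of the third estimate the paper uses (for $u\otimes u$ and $u\times B$ in the proof of Proposition~\ref{Prop_negative_regularity}). The hypothesis should therefore read $-\frac{d}{2}<s<\frac{d}{2}$, or $s=-\frac{d}{2}$ with $p=\infty$. With that correction, your outline goes through.
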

 
 Once again, the proof of all the estimates from Lemma \ref{lemma:product} can be done by an appropriate direct application of homogeneous paradifferential calculus. More precisely, we refer the interested reader to Theorems 2.47 and 2.52 from \cite{bcd11} where various estimates for the low-high  and high-high frequency components of the product $fg$ are discussed with details.
 
 \subsection{Sharp parabolic estimates}
 
For convenience, we recall in this section the sharp parabolic  estimates for the heat equation
\begin{equation}\label{heat_Equation}
\partial_t w - \Delta w = f, \quad w|_{t=0} = w_0,
\end{equation}
for a given initial datum $w_0$ and source term $f$.   We begin with the classical estimate in various Besov spaces. 
\begin{lemma}\cite{ag20}\label{Lemma_parabolic_Regularity}
The solution  $w$ of the heat equation \eqref{heat_Equation} enjoys the bound
\begin{equation}\label{Parabolic_Estimate}
\|w\|_{L^m([0,T); \dot{B}^{\sigma+2+\frac{2}{m}}_{p,q}(\mathbb R^d))} \lesssim  \|w_0\|_{\dot{B}^{\sigma+2}_{p,q}(\mathbb R^d)} + \|f\|_{L^r([0,T); \dot{B}^{\sigma+\frac{2}{r}}_{p,q} (\mathbb R^d))}, 
\end{equation}
for any $T\geq 0$, $\sigma\in \R$ and $p,q, r, m\in [1,\infty]$ with  $r\leq  m $ and   $r\leq q \leq m$.  
\end{lemma}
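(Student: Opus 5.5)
The plan is to prove the maximal regularity estimate of Lemma \ref{Lemma_parabolic_Regularity} frequency by frequency and then reassemble the Besov norms; the restrictions $r\le q\le m$ are exactly what make the reassembly work.

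\emph{Localized Duhamel formula.} Apply $\dot\Delta_j$ to \eqref{heat_Equation} and write
\begin{equation*}
\dot\Delta_j w(t)=e^{t\Delta}\dot\Delta_j w_0+\int_0^t e^{(t-\tau)\Delta}\dot\Delta_j f(\tau)\,d\tau .
\end{equation*}

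\emph{Frequency-localized heat bound.} For functions spectrally supported in an annulus of size $2^j$, the standard heat kernel estimate (see \cite{bcd11}) gives, for some $\kappa>0$ and all $p\in[1,\infty]$,
\begin{equation*}
\|e^{t\Delta}\dot\Delta_j g\|_{L^p}\lesssim e^{-\kappa t2^{2j}}\|\dot\Delta_j g\|_{L^p}.
\end{equation*}
This yields the pointwise-in-time bound
\begin{equation*}
\|\dot\Delta_j w(t)\|_{L^p}\lesssim e^{-\kappa t 2^{2j}}\|\dot\Delta_j w_0\|_{L^p}+\int_0^t e^{-\kappa(t-\tau)2^{2j}}\|\dot\Delta_j f(\tau)\|_{L^p}\,d\tau .
\end{equation*}

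\emph{Time integration.} Take the $L^m(0,T)$ norm in time.
\begin{itemize}
\item For the data term, $\|e^{-\kappa t2^{2j}}\|_{L^m}\lesssim 2^{-2j/m}$.
\item For the source term, apply Young's inequality in time with $1+\frac1m=\frac1\rho+\frac1r$, where $\rho\ge1$ because $r\le m$. Since $\|e^{-\kappa t2^{2j}}\|_{L^\rho}\lesssim 2^{-2j/\rho}=2^{-2j(1+\frac1m-\frac1r)}$, we get
\begin{equation*}
\|\dot\Delta_j w\|_{L^m_TL^p}\lesssim 2^{-\frac{2j}{m}}\|\dot\Delta_j w_0\|_{L^p}+2^{-2j(1+\frac1m-\frac1r)}\|\dot\Delta_j f\|_{L^r_TL^p}.
\end{equation*}
\end{itemize}
Multiplying by $2^{j(\sigma+2+\frac2m)}$ gives
\begin{equation*}
2^{j(\sigma+2+\frac2m)}\|\dot\Delta_j w\|_{L^m_TL^p}\lesssim 2^{j(\sigma+2)}\|\dot\Delta_j w_0\|_{L^p}+2^{j(\sigma+\frac2r)}\|\dot\Delta_j f\|_{L^r_TL^p},
\end{equation*}
which is exactly the scaling claimed in \eqref{Parabolic_Estimate}.

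\emph{Summation in $j$ (the main step).} Taking the $\ell^q(\mathbb Z)$ norm of the last inequality yields the estimate in the Chemin--Lerner spaces $\widetilde L^m_T(\dot B^{\sigma+2+\frac2m}_{p,q})$ on the left and $\widetilde L^r_T(\dot B^{\sigma+\frac2r}_{p,q})$ on the right. Passing to the standard Bochner spaces requires Minkowski's inequality in both directions:
\begin{itemize}
\item Since $q\le m$, we have $\|w\|_{L^m_T\dot B_{p,q}}\le\|w\|_{\widetilde L^m_T\dot B_{p,q}}$, so the left-hand side can be replaced by the norm in \eqref{Parabolic_Estimate}.
\item Since $r\le q$, we have $\|f\|_{\widetilde L^r_T\dot B_{p,q}}\le\|f\|_{L^r_T\dot B_{p,q}}$, so the right-hand side can be replaced as well.
\end{itemize}
These are the only places where the conditions $r\le q\le m$ enter.

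\emph{Justification of the formal steps.} For the estimates to be meaningful, $w_0$ and $f$ should be tempered distributions whose low-frequency parts vanish at infinity, so that the homogeneous Littlewood--Paley decomposition converges. One can first work with Schwartz data and then pass to the general case by density and Fatou's lemma.
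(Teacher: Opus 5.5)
Your proof is correct. It is the standard argument for this estimate: localized exponential decay of the heat semigroup, Young's inequality in time with $1+\frac1m=\frac1\rho+\frac1r$, then Minkowski's inequality, using $q\le m$ on the left and $r\le q$ on the right, to pass between Chemin--Lerner and Bochner norms. The paper gives no proof of its own and only cites \cite{ag20}, where this is the usual route.
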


We now state a sharper version of the preceding lemma in terms of the improved possible choice of the third index from the definition of the involved Besov spaces. 
\begin{lemma}\cite[Proposition 2.2 and Proposition 2.5]{ah}\label{Lemma_parab_2}
In the notations of Lemma \ref{Parabolic_Estimate}, the following hold:
\begin{enumerate}
\item If $f=0$, then the $w$ solution of \eqref{heat_Equation} satisfies 
\begin{equation*} 
\|w\|_{L^m(\mathbb {R}^+; \dot{B}^{\sigma+\frac{2}{m}}_{p,1}(\mathbb R^d))} \lesssim  \|w_0\|_{\dot{B}^{\sigma}_{p,m}(\mathbb R^d)} ,
\end{equation*}
as soon as  $m<\infty$.
\item If $w_0=0$, then the maximal regularity estimate  
\begin{equation*} 
\left\|  w \right\|_{L^m([0,T); \dot{B}^{\sigma_1+2}_{p,q}(\mathbb R^d))}
\lesssim  \|f\|_{L^m([0,T); \dot{B}^{\sigma}_{p,q}(\mathbb R^d))},
\end{equation*} 
holds for any $T\geq 0$, $\sigma\in \mathbb R$,  and  $1 \leq p, q \leq \infty$.
\end{enumerate}

\end{lemma}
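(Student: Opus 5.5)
The two items are of different nature, and I would prove both frequency by frequency. The basic input is the standard localized heat-kernel bound
\begin{equation*}
\|\dot\Delta_j e^{t\Delta} g\|_{L^p}\lesssim e^{-c t 2^{2j}}\|\dot\Delta_j g\|_{L^p},\qquad t\geq 0,\ j\in\mathbb Z,
\end{equation*}
valid uniformly in $p\in[1,\infty]$ (see \cite{bcd11}).

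\textbf{Item (1).} Set $b_j\bydef 2^{j\sigma}\|\dot\Delta_j w_0\|_{L^p}$, so that $(b_j)\in\ell^m$. It then suffices to bound the $L^m(\mathbb R^+)$ norm of
\begin{equation*}
S(t)\bydef\sum_j 2^{\frac{2j}{m}}e^{-ct4^j}b_j .
\end{equation*}
When $m=1$, I integrate in time term by term, using $\int_0^\infty 4^je^{-ct4^j}dt=c^{-1}$, and this is immediate. When $1<m<\infty$, I apply H\"older in $j$ for fixed $t$ and fix a small $\varepsilon\in(0,1)$:
\begin{equation*}
S(t)\leq\Big(\sum_j b_j^m\, 4^j e^{-\frac c2 t4^j}(t4^j)^{-\varepsilon}\Big)^{\frac1m}\Big(\sum_j (t4^j)^{\frac{\varepsilon m'}{m}}e^{-\frac c2 t4^j}\Big)^{\frac1{m'}} .
\end{equation*}
The second factor is bounded uniformly in $t>0$. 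The reason is that $\sum_j x_j^{\alpha}e^{-x_j}$, with $x_j=t4^j$ ranging over a geometric sequence and $\alpha>0$, is bounded independently of the lattice offset. Raising to the power $m$ and integrating in time, each $j$ then contributes
\begin{equation*}
\int_0^\infty 4^je^{-\frac c2 t4^j}(t4^j)^{-\varepsilon}dt=\int_0^\infty e^{-\frac c2 s}s^{-\varepsilon}ds<\infty .
\end{equation*}
This gives $\|S\|_{L^m}^m\lesssim\sum_j b_j^m$, which is exactly the claim. The restriction $m<\infty$ is used through H\"older with $m'<\infty$ and the integrability of $s^{-\varepsilon}$.

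\textbf{Item (2).} I read $\sigma_1$ as $\sigma$. By Duhamel and the bound above,
\begin{equation*}
\|\dot\Delta_j w(t)\|_{L^p}\lesssim\int_0^t e^{-c(t-\tau)4^j}\|\dot\Delta_j f(\tau)\|_{L^p}d\tau .
\end{equation*}
Young's inequality in time then gives $2^{j(\sigma+2)}\|\dot\Delta_jw\|_{L^m_TL^p}\lesssim 2^{j\sigma}\|\dot\Delta_jf\|_{L^m_TL^p}$ uniformly in $T$, i.e.\ the estimate in Chemin--Lerner norms $\widetilde L^m_T(\dot B^{\cdot}_{p,q})$. When $q=m$ these coincide with $L^m_T(\dot B^{\cdot}_{p,q})$ by Fubini, so the case $q=m$ is done.

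To pass to general $q$ I would regard $f\mapsto(4^j\dot\Delta_jw)_j$ as a convolution in time with the diagonal operator-valued kernel
\begin{equation*}
K(t)=\big(4^je^{-t4^j}\mathbf 1_{t>0}\big)_j ,
\end{equation*}
acting on the Banach space $X=\ell^q_\sigma(\mathbb Z;L^p)$. The case $q=m$ gives boundedness on one $L^{m_0}(X)$; the precise intermediate claim I would aim for is the $X$-valued Calder\'on--Zygmund theorem (Benedek--Calder\'on--Panzone). It requires only that the convolution be bounded on a single $L^{m_0}(\mathbb R;X)$ together with H\"ormander's condition. For $K$, the kernel regularity holds in operator norm, since $\|K'(t)\|_{X\to X}\lesssim\sup_j16^je^{-t4^j}\lesssim t^{-2}$, and this implies H\"ormander's condition. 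Extrapolation then yields boundedness on $L^m(\mathbb R;X)$ for every $1<m<\infty$. Restricting to $[0,T)$ is harmless by causality of the kernel.

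\textbf{Main obstacle.} The delicate step is exactly this exchange of $L^m_t$ and $\ell^q_j$ when $q\neq m$. The endpoints $m\in\{1,\infty\}$ genuinely fail in general, outside the Chemin--Lerner setting, so I expect the statement to be meant for $1<m<\infty$ (or $q=m$). Making the single-exponent bound on $L^{m_0}(X)$ rigorous, including the treatment of $p=1,\infty$ inside $X$, is where I would spend the care. I would check it first by taking $m_0=q$ via Fubini, which needs no UMD property of $X$, since Benedek--Calder\'on--Panzone requires only one $L^{m_0}$ bound.
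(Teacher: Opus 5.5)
\textbf{Verdict: correct, by a different route.} The paper gives no argument for this lemma. Both items are quoted from \cite{ah}, so there is nothing in the text to compare against, and your proposal has to stand as a self-contained proof. It does, with your readings $\sigma_1=\sigma$ and, in item (2), $1<m<\infty$ or $q=m$.

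\textbf{The range of $m$ in item (2).} Your restriction is genuinely needed. Already for $p=2$, consider a source whose $j$-th dyadic block is switched on only for $\tau\in[t^*-2\cdot 4^{-j},t^*-4^{-j}]$, $1\le j\le N$. Its $L^\infty_t\dot B^{\sigma}_{2,q}$ norm is $O(1)$, while $\|w(t^*)\|_{\dot B^{\sigma+2}_{2,q}}\gtrsim N^{1/q}$. The case $m=1$, $q>1$ fails by duality. This is harmless for the paper, which only uses $m=2$, $q=1$.

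\textbf{Item (1).} The condition $m<\infty$ actually enters through the positivity of $\alpha=\varepsilon m'/m$, not through $m'<\infty$. For $m=\infty$ the low-frequency sum $\sum_j e^{-ct4^j}$ diverges. An equivalent route is to slice time dyadically. For $t\in[4^{-k},4^{1-k})$ one has $S(t)\le 2^{2k/m}\sum_j g_{j-k}b_j$ with $g_n=2^{2n/m}e^{-c4^n}$. Discrete Young then applies, because $g\in\ell^1(\mathbb Z)$ exactly when $m<\infty$.

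\textbf{Item (2).} The Chemin--Lerner step, the case $q=m$ and the Benedek--Calder\'on--Panzone extrapolation are sound, with two adjustments.
\begin{enumerate}
\item Once you have $2^{j(\sigma+2)}\|\dot\Delta_j w(t)\|_{L^p}\lesssim (k_j*a_j)(t)$, with $k_j=4^je^{-ct4^j}\mathbf 1_{t>0}$ and $a_j=2^{j\sigma}\|\dot\Delta_j f\|_{L^p}\mathbf 1_{[0,T)}$, take $X=\ell^q(\mathbb Z)$ with scalar entries. On $\ell^q_\sigma(\mathbb Z;L^p)$ your scalar diagonal kernel is not the solution operator. With scalar $X$, $p$ plays no role. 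Truncating to $|j|\le N$ then makes $X$ finite-dimensional, with constants uniform in $N$, which disposes of measurability issues at $q=\infty$.
\item For $q=1$ the base exponent $m_0=1$ is not admissible in that theorem, which needs $m_0>1$. Obtain $q=1$ instead by duality from $q=\infty$: the adjoint is the time-reversed operator on $L^{m'}(\ell^\infty)$.
\end{enumerate}

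\textbf{A lighter alternative.} You can avoid operator-valued Calder\'on--Zygmund theory entirely. Each $k_j$ is nonincreasing on $(0,\infty)$ with $\|k_j\|_{L^1}=c^{-1}$, so $k_j*a_j\le 2c^{-1}Ma_j$, where $M$ is the Hardy--Littlewood maximal operator in time. The Fefferman--Stein inequality then gives the bound in $L^m(\ell^q)$ for $1<m<\infty$, $1<q\le\infty$. For $q=1$, pair with $g$ and use $\sup_j(\tilde k_j*|g|)\lesssim Mg$, where $\tilde k_j(t)=k_j(-t)$; this gives $1\le m<\infty$. Your route is heavier but more structural: it needs only one $L^{m_0}$ bound and the size estimate $\|K'(t)\|\lesssim t^{-2}$, rather than domination by an approximate identity.
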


%%%%%%%%%%%%%

 \section{Global solutions in  Fujita and Kato setting}\label{sec:Fujita:Kato}
	This section is devoted to the proof of Theorem  \ref{thm:critical:wp} on the global existence and uniqueness of solutions to \eqref{Model}. To that end, we reproduce the equation for the    momentum  in Duhamel's formulation, i.e.,
\begin{equation}\label{u:Duhamel}
	u(t) = e^{t\Delta}u_0 - \int _0^t e^{(t-\tau)\Delta} \mathbb {P} \left ( \div (u\otimes u) - c E\times B - u\times B \times B\right)(\tau) d\tau,
\end{equation}	 
where we exploited Ohm's law through \eqref{Ohm:laws} to expand the current density $j$ in the Lorentz force. Note in passing that we choose to present our proofs with the compressible version of Ohm's law, but we emphasize that all the analysis conducted hereafter works for the incompressible version as well.

 We also agree to take the electrical conductivity $\sigma=1$ for the mere sake of simplicity, and we further introduce the spaces 
\begin{equation*}
\begin{aligned}
	&X_t \bydef   L^\infty \big ([0,t);\dot H^{\frac{d}{2}-1}(\mathbb {R}^d) \big ) \cap   L^2 \big ([0,t);\dot B^{\frac{d}{2}}_{2,1}(\mathbb {R}^d) \big ) ,\\
	&Y_t \bydef   L^\infty \big ([0,t);\dot H^{\frac{d}{2}-1}(\mathbb {R}^d) \big ),\\
	&Z_t \bydef   L^2 \big ([0,t);\dot H^{\frac{d}{2}-1}(\mathbb {R}^d) \big ),
	\end{aligned}
\end{equation*}
for all $t\geq 0$, with the convention 
\begin{equation*}
	X_0 \bydef \dot H^{\frac{d}{2}-1}(\mathbb {R}^d) .
\end{equation*}

We are now in a position to prove Theorem \ref{thm:critical:wp}.
\begin{proof}[Proof of Theorem \ref{thm:critical:wp}]
We claim the following estimates for the solution of \eqref{Model} 
\begin{equation}\label{Nonl_Estimate_Main}
	\norm {u}_{X_t} \lesssim \norm {u_0}_{\dot H^{\frac{d}{2}-1}} + \norm {u}_{X_t}^2 +\norm {cE}_{Z_t} \norm {B}_{Y_t} + \norm {u}_{X_t} \norm {B}_{Y_t}^2
\end{equation}
and
\begin{equation} \label{EB:Estimate}
	\norm {(E,B)}_{Y_t} + \norm {cE}_{Z_t} \lesssim \norm {(E_0,B_0)}_{\dot H^{\frac{d}{2}-1}}+ \norm {u}_{X_t} \norm {(E,B)}_{Y_t},
\end{equation}
for all $t\geq 0$. 

For the velocity field, we obtain, by applying the parabolic estimates from Lemmas \ref{Lemma_parabolic_Regularity}  and \ref{Lemma_parab_2} to \eqref{u:Duhamel}, that    
\begin{equation}\label{First_term_X_t}
\|u\|_{X_t}\lesssim \norm {u_0}_{\dot H^{\frac{d}{2}-1}}+\| \div (u\otimes u) + c E\times B + u\times B \times B\|_{L^2_t \dot{B}^{\frac{d}{2}-2}_{2,1}}. 
\end{equation}
 Then, the  product law estimate  from Lemma \ref{lemma:product} yields that 
 \begin{equation}\label{Esti_E_B}
 \begin{aligned}
\|cE\times B\|_{L^2_t\dot{B}^{\frac{d}{2}-2}_{2,1}}\lesssim &\,\|cE\|_{L^2_t\dot{H}^{\frac{d}{2}-1}}\|B\|_{L^\infty_t\dot{H}^{\frac{d}{2}-1}}\lesssim \norm {cE}_{Z_t} \norm {B}_{Y_t}. 
\end{aligned}
\end{equation}
Similarly, by an application of the same lemma, we establish the control 
\begin{equation}\label{B_term_1}
\begin{aligned}
\|  u\times B \times B\|_{L^2_t\dot{B}^{\frac{d}{2}-2}_{2,1}}\lesssim &\, \|u\|_{L^2_t\dot{B}_{2,1}^{\frac{d}{2}}}\|B\times B\|_{L^\infty_t\dot{B}^{\frac{d}{2}-2}_{2,1}}\\
\lesssim&\,\|u\|_{L^2_t\dot{B}_{2,1}^{\frac{d}{2}}}\|B\|_{L^\infty_t\dot{H}^{\frac{d}{2}-1}}^2 \lesssim  \norm {u}_{X_t} \norm {B}_{Y_t}^2,
\end{aligned}
\end{equation}
and
\begin{equation}\label{prodcut_Est_u}
	\| \div (u\otimes u)\|_{L^2_t\dot{B}^{\frac{d}{2}-2}_{2,1}}\lesssim \norm {  (u\otimes u)}_{L^2_{t}\dot B^{ \frac{d}{2} -1}_{2,1}} \lesssim \norm u_{L^2_{t}\dot B^{ \frac{d}{2} }_{2,1}} \norm u_{ L^\infty _{t}\dot B^{ \frac{d}{2} -1}_{2,1}} \lesssim \norm u_{X_t}^2.
\end{equation}
  Therefore, employing  these three inequalities into \eqref{First_term_X_t}  to control the right-hand side concludes    the proof of \eqref{Nonl_Estimate_Main}.
   
 As for the control of the electromagnetic field, employing at first standard energy estimate in Sobolev spaces (see, for instance, \cite[Proposition 2.1]{ag20} and \cite[Lemma 2.5]{HH25}
where most of the arguments of the proof are given in details) we obtain that 
 \begin{equation*}
 	\norm { (E,B)}_{L^\infty _t \dot H^{\frac{d}{2}-1}} + \norm {cE}_{L^2_t \dot H^{\frac{d}{2}-1}} \lesssim \norm { (E_0,B_0)}_{ \dot H^{\frac{d}{2}-1}} + \norm {u\times B}_{L^2_t \dot H^{\frac{d}{2}-1}}.
 \end{equation*}  
The bound for the nonlinear term can easily be derived from a straightforward application of product law estimates  from Lemma \ref{lemma:product} to end up with 
\begin{equation*}
	\norm {u\times B}_{L^2_t\dot H^{\frac{d}{2}-1}}\lesssim \norm u_{L^2_t\dot B_{2,1}^{\frac{d}{2}}}\norm { B}_{L^\infty _t \dot H^{\frac{d}{2}-1}},
\end{equation*}
whereby \eqref{EB:Estimate} follows.

Now we show how to deduce the global control of the solution based on the estimates  \eqref{Nonl_Estimate_Main} and \eqref{EB:Estimate}. To that end,  introducing the time-dependent positive function 
\begin{equation*}
	f(t) \bydef \norm {u}_{X_t} + \norm {(E,B)}_{Y_t} + 
	\norm {cE}_{Z_t}, \quad \text{for all } t\geq 0,
\end{equation*}
we recast the combined bounds on $u$, $E$ and $B$ as 
\begin{equation*} 
	f(t) \leq f_0 + \mathcal G \big (f(t)\big ), \quad \text{for all } t\geq 0,
\end{equation*}
where the function $\mathcal G$ is defined on $[0,\infty)$ by  $r\mapsto \mathcal G(r) \bydef C_1 r^2 + C_2 r^3,$
for some fixed constants $C_1,C_2>0$ which do not depend on the speed of light $c$. Therefore, one can easily deduces the desired global-in-time bound on the function $f$ under suitable assumptions on the datum $f_0$. Indeed, an admissible condition, for instance, would be $\mathcal {G}(2f_0) < f_0$ (see Lemma 5.2 from \cite{ahh24}) 
which can be recast as  $4 C_1 f_0 + 8 C_2 f_0^2 <1.$

This condition can be reformulated (in a weaker form) as $f_0 < \delta,$
for some $\delta>0$ depending only on the universal constants $C_1$ and $C_2$. All in all, under this last smallness condition on the initial data, we deduce that  $f(t)\leq 2 f_0,$
for all $t\geq 0$, thereby completing the proof of the a priori estimates of the solution in terms of the initial data.

With the global control at hand, it is now straightforward to construct a solution to \eqref{Model} that enjoys the global bound stated in Theorem \ref{thm:critical:wp}. Indeed, this can be done by a classical compactness argument applied to a mollified,  or an iterative, version of \eqref{Model}; see for instance \cite{ag20, ahh24, GIM14}.

As for the uniqueness of the solution $(u,E,B)$, we emphasize  that it can be established along the same lines of proof, detailed above for the a priori estimate, in the space $X_t \times Y_t \times  Y_t$, under the smallness condition on the initial data. This completes the proof of the theorem
\end{proof}

Given the control of the velocity field from Theorem  \ref{thm:critical:wp} (in critical spaces in regard to the scaling of the Navier--Stokes equations), it is then possible to propagate any other reasonable regularity for the initial data (for both velocity and electromagnetic field). For a later use in the analysis of the long-time behavior of the global solution constructed in Theorem \ref{thm:critical:wp}, the propagation of the regularity of the initial data is the endpoint Besov space induced by the embedding 
\begin{equation*}
	L^1(\mathbb R^d) \hookrightarrow \dot B^{-\frac{d}{2}}_{2,\infty} (\mathbb R^d),
\end{equation*}
will be crucial to achieve the optimal decay at $t\to \infty$. We thus complete this section by proving the following proposition.
\begin{proposition}\label{Prop_negative_regularity}
	In addition to the assumptions from Theorem \ref{thm:critical:wp}, if 
\begin{equation*}
 (u,E_0,B_0)\in \dot B^{-\frac{d}{2}}_{2,\infty} (\mathbb R^d),
	\end{equation*}
	then the solution of \eqref{Model} also enjoys the bounds 
	\begin{equation*}
		u\in C (\mathbb R^+; B^{-\frac{d}{2}}_{2,\infty}  (\mathbb R^d)) \cap L^2 (\mathbb R^+; B^{-\frac{d}{2}+1}_{2,\infty}   (\mathbb R^d)),
	\end{equation*}
	and 
	\begin{equation*}
		(E,B) \in C (\mathbb R^+; B^{-\frac{d}{2}}_{2,\infty}   (\mathbb R^d)),  \qquad 
  cE \in L^2 (\mathbb R^+; B^{-\frac{d}{2}}_{2,\infty}   (\mathbb R^d)),
	\end{equation*}
	 uniformly with respect to the speed of light $c\in (0,\infty)$.
\end{proposition}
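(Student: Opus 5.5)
The plan is to redo the a priori estimates of the proof of Theorem \ref{thm:critical:wp} one level lower, in $\dot B^{-\frac d2}_{2,\infty}$, and to exploit that the resulting estimates are \emph{linear} in the low-regularity norms. The coefficients will involve only quantities already controlled globally by Theorem \ref{thm:critical:wp}, chiefly $\|u\|_{L^2(\mathbb R^+;\dot B^{\frac d2}_{2,1})}$, $\|cE\|_{L^2\dot H^{\frac d2-1}}$ and $\|B\|_{L^\infty \dot H^{\frac d2-1}}$. Set
\begin{equation*}
N(t)\bydef \|u\|_{L^\infty_t\dot B^{-\frac d2}_{2,\infty}}+\|u\|_{\widetilde L^2_t\dot B^{-\frac d2+1}_{2,\infty}}+\|(E,B)\|_{L^\infty_t\dot B^{-\frac d2}_{2,\infty}}+\|cE\|_{\widetilde L^2_t\dot B^{-\frac d2}_{2,\infty}} .
\end{equation*}
Working blockwise with $\dot\Delta_j$ and then taking $\sup_j 2^{-jd/2}$ (Chemin--Lerner norms, which dominate the stated ones) is the natural way to run the energy method in an $\ell^\infty$-type Besov space. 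Everything will be uniform in $c$, since $c$ enters only through $cE$.

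\textbf{Maxwell part.} Localize Ampère and Faraday with $\dot\Delta_j$ and take the $L^2$ inner product with $(\dot\Delta_jE,\dot\Delta_jB)$. The curl terms cancel, giving
\begin{equation*}
\tfrac{1}{2c}\tfrac{d}{dt}\|\dot\Delta_j(E,B)\|_{L^2}^2+\|\dot\Delta_j E\|_{L^2}^2=-\langle \dot\Delta_j(u\times B),\dot\Delta_jE\rangle .
\end{equation*}
Multiply by $c$ and use Young, absorbing $\tfrac12\|c\dot\Delta_jE\|^2$. After time integration this gives $\|(E,B)\|^2$ plus $\|cE\|^2_{\widetilde L^2\dot B^{-d/2}_{2,\infty}}$ bounded by the data plus $\|u\times B\|^2_{\widetilde L^2\dot B^{-d/2}_{2,\infty}}$. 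For the source, the second product law of Lemma \ref{lemma:product} (with $\eta=-\frac d2$) fails at the endpoint, so I will use the third one instead, with $p=1$:
\begin{equation*}
\|u\times B\|_{\dot B^{-\frac d2}_{2,\infty}}\lesssim \|u\|_{\dot B^{\frac d2 -1}_{2,1}}\|B\|_{\dot B^{1-\frac d2}_{2,\infty}}, \qquad\text{or}\qquad \lesssim \|u\|_{\dot B^{\frac d2}_{2,1}}\|B\|_{\dot B^{-\frac d2}_{2,\infty}}.
\end{equation*}
The second form follows from a paraproduct argument with $f\in\dot B^{d/2}_{2,1}\subset L^\infty$ and the remainder handled with $s=\frac d2$ formally. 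Taking it, the term is bounded by $\|u\|_{L^2\dot B^{d/2}_{2,1}}\|B\|_{L^\infty\dot B^{-d/2}_{2,\infty}}$, and the first factor is $\lesssim\delta$.

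\textbf{Velocity part.} Apply Lemma \ref{Lemma_parabolic_Regularity} to \eqref{u:Duhamel} with $q=\infty$ and exponents $(m,r)=(\infty,1)$ and $(2,2)$, or equivalently use a localized energy estimate. This gives $\|u\|$ in both norms bounded by $\|u_0\|_{\dot B^{-d/2}_{2,\infty}}$ plus the forcing in $\widetilde L^2\dot B^{-\frac d2-1}_{2,\infty}$ or $\widetilde L^1\dot B^{-\frac d2}_{2,\infty}$. The three forcing terms are handled as follows.
\begin{itemize}
\item $\div(u\otimes u)$: estimate it in $L^2\dot B^{-\frac d2-1}_{2,\infty}$ by $\|u\otimes u\|_{L^2\dot B^{-d/2}_{2,\infty}}\lesssim \|u\|_{L^2\dot B^{d/2}_{2,1}}\|u\|_{L^\infty\dot B^{-d/2}_{2,\infty}}$, by the same endpoint product.
\item $u\times B\times B$: estimate it in $L^2\dot B^{-d/2-1}$, or more simply in $L^1\dot B^{-d/2}_{2,\infty}$, via $\|u\|_{L^2\dot B^{d/2}_{2,1}}\|B\times B\|_{L^2\dot B^{-d/2}}$. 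Here $B\times B$ will be put in $\dot B^{-\frac d2}_{2,\infty}$ using the third product law with $s=\frac d2-1$, $p=2$, giving $\|B\|_{\dot H^{d/2-1}}\|B\|_{\dot H^{1-d/2}}$. The factor $\dot H^{1-d/2}$ is interpolated between $\dot B^{-d/2}_{2,\infty}$ and $\dot H^{d/2-1}$, since $1-\frac d2\in(-\frac d2,\frac d2-1)$ for $d\ge3$. The outcome is again linear in $N$, with a small coefficient.
\item $cE\times B$: by the third law, $\|cE\times B\|_{L^2\dot B^{-d/2}_{2,\infty}}\lesssim\|cE\|_{L^2\dot H^{d/2-1}}\|B\|_{L^\infty\dot H^{1-d/2}}$, which lands in $L^2\dot B^{-d/2}$, one derivative too high. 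Instead I will pair $\|cE\|_{\widetilde L^2\dot B^{-d/2}_{2,\infty}}$, controlled in $N$, with $\|B\|_{L^\infty \dot B^{d/2}_{2,1}}$. That is unavailable, so I expect to use $\|B\|_{\dot H^{d/2-1}}$ and lose one derivative. The fix is to estimate this term in $L^2\dot B^{-\frac d2 -1}_{2,\infty}$, which the $(2,2)$ parabolic estimate permits: the third law with target index $-\frac d2$ cannot give $-\frac d2-1$ directly, but since $cE\times B$ has its low frequencies summed, one can bound its $\dot B^{-d/2-1}$ norm by a low-high/high-high decomposition, with $cE$ in $\widetilde L^2\dot B^{-d/2}_{2,\infty}$ and $B\in L^\infty\dot H^{d/2-1}\cap L^\infty\dot B^{-d/2}_{2,\infty}$.
\end{itemize}
This term is the main obstacle. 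It is where the hyperbolic $B$, which gains no regularity, meets the endpoint index $-\frac d2$. If the direct paraproduct fails, I would instead place $cE\times B$ in $L^2\dot B^{-\frac d2}_{2,\infty}$ and only prove the $L^2\dot B^{-d/2+1}$ bound of $u$ after the $L^\infty$ bound, via maximal regularity (Lemma \ref{Lemma_parab_2}(2)).

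\textbf{Closing.} Collecting the estimates gives $N(t)\le C N(0)+C\delta N(t)$ for all $t$. Since $\delta$ is small, $N(t)\le 2CN(0)$ uniformly in $t$ and $c$; no smallness of $N(0)$ is needed, because every estimate is linear in $N$. Continuity in time follows from the equations in the standard way, and rigor comes from applying the argument to the approximate solutions used in Theorem \ref{thm:critical:wp} and passing to the limit.
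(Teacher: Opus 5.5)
Your Maxwell estimate and your treatment of $\div(u\otimes u)$ are fine and agree with the paper. The endpoint bound $\|u\times B\|_{\dot B^{-\frac d2}_{2,\infty}}\lesssim\|u\|_{\dot B^{\frac d2}_{2,1}}\|B\|_{\dot B^{-\frac d2}_{2,\infty}}$ is just the third product law of Lemma \ref{lemma:product} with $s=-\frac d2$, $p=\infty$.

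The gap is the Lorentz force in the velocity equation, that is, both $cE\times B$ and $(u\times B)\times B$. To bound $u$ in $L^\infty_t\dot B^{-\frac d2}_{2,\infty}$, which is the bound the decay proof actually uses, these terms must lie in $L^1_t\dot B^{-\frac d2}_{2,\infty}$ or in $\widetilde L^2_t\dot B^{-\frac d2-1}_{2,\infty}$. In your framework $B$ is only bounded in time, so both terms are at best $L^2$ in time. This already breaks your second bullet: there $\|B\times B\|_{L^2_t\dot B^{-\frac d2}_{2,\infty}}$ is bounded by $\|B\|_{L^\infty_t\dot H^{\frac d2-1}}\|B\|_{L^\infty_t\dot H^{1-\frac d2}}$, which carries no time integrability.

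The $\dot B^{-\frac d2-1}_{2,\infty}$ route is impossible for any product without divergence structure, however regular the factors. If $f,g$ have Fourier support in $\{|\xi|\sim1\}$ and $\int fg\,dx\neq0$, then $\widehat{fg}(\xi)\to\int fg\,dx$ as $\xi\to0$. Hence $2^{-k(\frac d2+1)}\|\dot\Delta_k(fg)\|_{L^2}\sim2^{-k}\to\infty$ as $k\to-\infty$. Nothing makes $\int E\times B\,dx$ vanish, and $\mathbb P$ does not remove the zero mode, so no paraproduct decomposition can help.

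Your fallback fails for the same reason. For a forcing $f\in L^2_t\dot B^{-\frac d2}_{2,\infty}$, the Duhamel term only satisfies $2^{-k\frac d2}\|\dot\Delta_k u(t)\|_{L^2}\lesssim 2^{-k}\|f\|_{L^2_t\dot B^{-\frac d2}_{2,\infty}}$, which degenerates as $k\to-\infty$. The parabolic estimates then place that part of $u$ in $L^\infty_t\dot B^{1-\frac d2}_{2,\infty}\cap L^2_t\dot B^{2-\frac d2}_{2,\infty}$, never in $L^\infty_t\dot B^{-\frac d2}_{2,\infty}$.

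The missing ingredient is time integrability of the magnetic field. The paper puts the whole force $j\times B$ in $L^1_tL^1_x\hookrightarrow L^1_t\dot B^{-\frac d2}_{2,\infty}$, using $\|j\times B\|_{L^1_{t,x}}\le\|j\|_{L^2_{t,x}}\|B\|_{L^2_{t,x}}$.

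\begin{itemize}
\item The bound $\|j\|_{L^2_{t,x}}\lesssim\|(u_0,E_0,B_0)\|_{L^2}$ is the energy inequality. The data are in $L^2$ by interpolation between $\dot B^{-\frac d2}_{2,\infty}$ and $\dot H^{\frac d2-1}$.
\item The bound $B\in L^2_{t,x}$ comes from a hidden parabolic structure. Combining Amp\`ere and Faraday gives $\partial_tB-\Delta B=\nabla\times(u\times B)-c^{-2}\partial_{tt}B$. An $\dot H^{-1}$ energy estimate for this equation handles the $c^{-2}\partial_{tt}B$ term by integrating by parts in time and using $\partial_tB=-c\nabla\times E$ together with $cE\in L^2_{t,x}$. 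Combined with Lemma \ref{lemma:bootstrap}, driven by $\|u\|_{L^2_t\dot B^{\frac d2}_{2,1}}$, this yields $B\in L^\infty_t\dot H^{-1}\cap L^2_{t,x}$.
\end{itemize}

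The resulting bound depends on the data and is not small. That is harmless, because this term enters only as a pure forcing. Your localized Maxwell identity dissipates only $E$; the dissipation of $B$ has to be extracted separately, either as above or through a cross term like $F_c$ in the proof of Theorem \ref{thm:decay}.

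A smaller point concerns the Chemin--Lerner norms. Minkowski's inequality gives $L^2_t\dot B^s_{2,\infty}\hookrightarrow\widetilde L^2_t\dot B^s_{2,\infty}$, not the reverse, and Lemma \ref{Lemma_parabolic_Regularity} with $q=\infty>m=2$ is not admissible. So the Chemin--Lerner bounds that you (and the paper) obtain do not imply $L^2(\mathbb R^+;\dot B^{s}_{2,\infty})$. In fact, take $\widehat{u_0}(\xi)=\chi(\xi)\big(a-\frac{(a\cdot\xi)\xi}{|\xi|^2}\big)$ with $\chi$ a low-frequency cutoff and $a\neq0$. Then $\|e^{t\Delta}u_0\|_{\dot B^{1-\frac d2}_{2,\infty}}\sim t^{-\frac12}$ for large $t$, so only the $\widetilde L^2$ version should be expected.
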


\begin{proof} 
Estimating the velocity based on its  Duhamel reformulation 
 \begin{equation*} 
	u(t) = e^{t\Delta}u_0 - \int _0^t e^{(t-\tau)\Delta} \mathbb {P} \left ( \div (u\otimes u) -j \times B\right)(\tau) d\tau,
\end{equation*}	
we find that 
\begin{equation}\label{negative:es:u}
	\begin{aligned}
		\norm {u}_{L^\infty_t \dot  B^{-\frac{d}{2}}_{2,\infty}} +\norm{u}_{\widetilde L^2_t \dot B^{-\frac{d}{2}+ 1}_{2,\infty}} 
		&\lesssim \norm {u_0}_{  \dot B^{-\frac{d}{2} }_{2,\infty}} + \norm {u\cdot \nabla u}_{\widetilde L^2_t \dot B^{-\frac{d}{2}-1}_{2,\infty}}  +  \norm {J\times B}_{L^1_t  \dot B^{-\frac{d}{2}}_{2,\infty}}
		\\&\lesssim \norm {u_0}_{  \dot B^{-\frac{d}{2} }_{2,\infty}} + \norm {u\otimes u}_{\widetilde L^2_t \dot B^{-\frac{d}{2}}_{2,\infty}}  + \norm {J\times B}_{L^1_{t,x}} 
		\\&\lesssim \norm {u_0}_{  \dot B^{-\frac{d}{2} }_{2,\infty}} + \norm {u }_{ L^2_t \dot B^{\frac{d}{2}}_{2,1}}\norm {u }_{  L^\infty _t \dot B^{-\frac{d}{2}}_{2,\infty}}  + \norm E
_{L^2_{t,x}} \norm B 
_{L^2_{t,x}} ,
	\end{aligned} 
\end{equation}
where we employed the product laws from Lemma \ref{lemma:product}, once again (note that the appearance of the  Chemin--Lerner space $ \widetilde L^2_t \dot B^{-\nicefrac{d}{2}+ 1}_{2,\infty} $ here is complementary as it does not serve at all in controlling the terms from the right-hand side. We refer to \cite{bcd11} for the precise definition of these spaces).    
Except for the bound  of  $ B $ in $
{L^2_{t,x}} $,   all the other norms appearing on the right-hand side above are finite and under control globally in time. We are thus only left with  the details  to control  $ B $ in $
{L^2_{t,x}} $, which we now handle separately.  To that end, recasting the equation of $B$ as a perturbation of the heat equation, we find that 
 \begin{equation*}
 	  \partial_t B _j- \Delta B_j  = \Delta_j \nabla \times (u\times B) - c^{-2}\partial_{tt} B_j ,
 \end{equation*}
 where  $B_j\bydef \Delta_j B$ is the localized magnetic field via Littlewood--Paley dyadic blocs $(\Delta_j)_{j\in \mathbb Z}$. Thus, it follows, by an $L^2$ energy estimate, that 
 \begin{equation}\label{parabolic_inequality}
 	\norm {B}_{L^\infty_t \dot H^{-1}\cap L^2_{t,x}} ^2 \lesssim \norm {B_0}_{\dot H^{-1}} ^2+  \norm { u\times B}_{L^2_t \dot H^{-1}}^2 - c^{-2} \sum_{j\in \mathbb Z} 2^{-2j} \int_0^t \int_{\mathbb R^d}  \partial_{tt} B_j  \cdot B_j.
 \end{equation}
 By an integration by parts in time and H\"older inequalities, we write 
 \begin{equation*}
 	\begin{aligned}
 		\left|  \sum_ {j\in \mathbb Z} 2^{-2j}	\int_0^t \int_{\mathbb R^d} \partial_{tt} B_j  \cdot B_j \right| 
 		&\leq  \norm {\partial_t B}_{L^2_t \dot H^{-1}}^2 + \norm {\partial_t B}_{L^\infty_t \dot H^{-1}} \norm { B}_{L^\infty_t \dot H^{-1}} + \norm {B_0}_{\dot H^{-1}}\norm {E_0}_{L^2} 
 		\\
 		&\leq \norm {cE}_{L^2_{t,x}}^2 + c\norm {E}_{L^\infty_tL^2} \norm { B}_{L^\infty_t \dot H^{-1}} + c\norm {B_0}_{\dot H^{-1}}\norm {E_0}_{L^2} ,
 	\end{aligned}
 \end{equation*}
 where we used Faraday's equation  from \eqref{Main_System}.
 Using Young inequalities, once again, and incorporating the preceding estimate into the parabolic inequality \eqref{parabolic_inequality}  yields that 
 \begin{equation*}
 	\norm {B}_{L^\infty_t \dot H^{-1}\cap L^2_{t,x}} ^2 \lesssim \norm {B_0}_{\dot H^{-1}} ^2 + \mathcal E_0^2\Big (1+ \norm {(u_0,E_0,B_0)}_{\dot H^{\frac{d}{2}-1}} \Big )  +  \norm { u }_{L^2_t \dot B^{\frac{d}{2}}_{2,1}}^2 \norm {   B}_{L^\infty _t \dot H^{-1}}^2  ,
 \end{equation*}
 where we also used that 
 \begin{equation*}
 	\begin{aligned}
 		\norm {cE}_{L^2_{t,x}}^2 
 		&\leq \norm {j}_{L^2_{t,x}}^2 + \norm {u\times B}_{L^2_{t,x}}^2
 		\\
 		&\leq \mathcal E_0^2 + \norm {u }_{L^2_{t}L^\infty}^2\norm { B}_{L^\infty_t L^2}^2
 		\\
 		&\leq \mathcal E_0^2 \Big (1+ \norm {(u_0,E_0,B_0)}_{\dot H^{\frac{d}{2}-1}} \Big ) .
 	\end{aligned}
 \end{equation*}
 In fact, we emphasize that one can establish, along the same lines of proof as above, the slightly stronger estimate 
 \begin{equation*}
 	\begin{aligned}
 		&\norm {B}_{L^\infty (t_0,t; \dot H^{-1})\cap L^2 (t_0,t; L^2)} ^2 
 		\\
 		&\qquad\lesssim \norm {B (t_0)}_{\dot H^{-1}} ^2 + \mathcal E_0^2\Big (1+ \norm {(u,E,B)(t_0)}_{\dot H^{\frac{d}{2}-1}} \Big )  +  \norm { u }_{L^2(t_0,t; \dot B^{\frac{d}{2}}_{2,1})}^2 \norm {   B}_{L^\infty (t_0,t; \dot H^{-1})}^2  ,
 	\end{aligned}
 \end{equation*}
 for all $0\leq t_0 \leq t$. 
 Hence, by a bootstrap argument in time (see Lemma \ref{lemma:bootstrap}), we finally deduce that   
 	\begin{equation*}
 	\norm {B}_{L^\infty_t \dot H^{-1}\cap L^2_{t,x}} ^2 \leq C \left( \norm {B_0}_{\dot H^{-1}} ^2 + \mathcal E_0^2 \Big (1+ \norm {(u_0,E_0,B_0)}_{\dot H^{\frac{d}{2}-1}} \Big ) \right)   \exp\left( C  \norm { u }_{L^2_t \dot B^{\frac{d}{2}}_{2,1}}^2  \right),
 \end{equation*}
 for some constant $C>0$. By virtue of the control of $u$ from Theorem \ref{thm:critical:wp}, we end up with the bound \begin{equation*}
 	\norm {B}_{L^\infty_t \dot H^{-1}\cap L^2_{t,x}} \leq C_0,
 \end{equation*}
 where $C_0$ depends only on the initial data.
 
 Turning back to \eqref{negative:es:u}, we obtain by employing the preceding bound for $B$, that 
 \begin{equation*}
 	\norm {u}_{L^\infty_t \dot  B^{-\frac{d}{2}}_{2,\infty} \cap \widetilde L^2_t \dot B^{-\frac{d}{2}+ 1}_{2,\infty}} 
		 \lesssim \norm {u_0}_{  \dot B^{-\frac{d}{2} }_{2,\infty}}  + C_0   + \norm {u }_{ L^2_t \dot B^{\frac{d}{2}}_{2,1}}\norm {u }_{  L^\infty _t \dot B^{-\frac{d}{2}}_{2,\infty}}  
 \end{equation*}
 As before, we emphasize that one can in fact prove the slightly stronger estimate 
 \begin{equation*}
 	\norm {u}_{L^\infty(t_0,t; \dot  B^{-\frac{d}{2}}_{2,\infty}) \cap \widetilde L^2(t_0,t; \dot B^{-\frac{d}{2}+ 1}_{2,\infty})} 
		 \lesssim \norm {u (t_0)}_{  \dot B^{-\frac{d}{2} }_{2,\infty}}  + C_0   + \norm {u }_{ L^2(t_0,t; \dot B^{\frac{d}{2}}_{2,1})}\norm {u }_{  L^\infty (t_0,t; \dot B^{-\frac{d}{2}}_{2,\infty})}  ,
 \end{equation*}
 for all $0\leq t_0 \leq t$.
 Once again, by a bootstrap argument in time (Lemma \ref{lemma:bootstrap}), we finally arrive at   
 \begin{equation*}
 	\norm {u}_{L^\infty_t \dot  B^{-\frac{d}{2}}_{2,\infty} \cap \widetilde L^2_t \dot B^{-\frac{d}{2}+ 1}_{2,\infty}} \lesssim \left( \norm {u_0}_{  \dot B^{-\frac{d}{2} }_{2,\infty}}  + C_0 \right) \exp\left( C  \norm { u }_{L^2_t \dot B^{\frac{d}{2}}_{2,1}}^2  \right),
 \end{equation*}
 for some constant $C>0$ and up to a suitable change in the constant $C_0$. Finally, by the global bounds previously established in Theorem \ref{thm:critical:wp}, the claimed global control of the velocity field in $L^\infty_t \dot  B^{-\nicefrac{d}{2}}_{2,\infty} \cap \widetilde L^2_t \dot B^{-\nicefrac{d}{2}+ 1}_{2,\infty}$ follows.
 
 We now turn our attention to the estimate of the electromagnetic field. To that end, writing the localized energy estimate for the Maxwell equation
 \begin{equation*}
 	\norm {(B_j,E_j)(t)}_{L^2}^2 + c^2 \int_0^t \norm {E_j(\tau)}_{L^2}^2 d\tau \lesssim  \norm {\Delta_j(B_0,E_0)}_{L^2}^2  + \int_0^t \norm {\Delta_j (u\times B)}_{L^2}^2,
 \end{equation*} 
 yields, by means of product law estimates, that 
 \begin{equation*}
 	\begin{aligned}
 		\norm {(E,B)}_{L^\infty_t B^{-\frac{d}{2}}_{2,\infty}} ^2
 		& \lesssim  \norm { (B_0,E_0)}_{B^{-\frac{d}{2}}_{2,\infty}}^2  +   \norm { u\times B}_{ L^2_t B^{-\frac{d}{2}}_{2,\infty}}^2
 		\\
 		& \lesssim  \norm { (B_0,E_0)}_{B^{-\frac{d}{2}}_{2,\infty}}^2  +  \int_0^t  \norm { u (\tau)}_{  B^{\frac{d}{2}}_{2,1}}^2 \norm {   B  (\tau) }_{   B^{-\frac{d}{2}}_{2,\infty}}^2 d\tau.
 	\end{aligned}
 \end{equation*}
 Applying Gron\"wall inequality, we arrive at a global bound for  $(E,B)$ in $L^\infty_t B^{-\frac{d}{2}}_{2,\infty}$. More precisely, we find that  \begin{equation*}
 	\norm {(E,B)}_{L^\infty_t B^{-\frac{d}{2}}_{2,\infty}}  \lesssim  \norm { (B_0,E_0)}_{B^{-\frac{d}{2}}_{2,\infty}} \exp\left( C  \norm { u }_{L^2_t \dot B^{\frac{d}{2}}_{2,1}}^2  \right) \lesssim   C_0,
 \end{equation*}
 after a suitable change in the definition of the constant $C_0$. This concludes the proof of the proposition.
\end{proof}

  \section{Decay estimates }

  In this section, we investigate the long-time behavior of the global solution constructed in Theorem \ref{thm:critical:wp}. More precisely, we prove Theorem \ref{thm:decay} by establishing an optimal decay rate for the solution   under the additional assumption
  \begin{equation*}
  	u_0\in \dot{B}_{2,\infty}^{-d/2}(\mathbb{R}^d).
  \end{equation*}
Although this assumption is weaker than the condition $u_0\in L^1(\mathbb{R}^d)$, in view of the embedding
\begin{equation*}
	L^1(\mathbb{R}^d)\hookrightarrow \dot B_{2,\infty}^{- \frac{d}{2}}(\mathbb{R}^d),
\end{equation*}
it still allows us to recover the optimal decay rate associated with the heat kernel, namely,
\begin{equation*}
	\norm{  e^{t\Delta}u_0}_{ \dot H^s(\mathbb{R}^d)}
\lesssim (1+t)^{- \frac{s}{2} - \frac{d}{2}}.
\end{equation*}
The proof of Theorem \ref{thm:decay} is based on two main ingredients: First, we establish time-differential inequality of a Lyapunov functional     that is poitwisely equivalent to the Sobolev norm $ \dot H^s$. Then, by adapting an argument inspired from the Fourier splitting method, we show that this Lyapunov functional enjoys the above optimal decay estimate.

   \begin{proof}[Proof of Theorem \ref{thm:decay}]
   Given the boundedness of $(u,E,B)$ in $\dot B^{-\frac{d}{2}}_{2,\infty}$, it is sufficient to prove the decay estimate for $s=\frac{d}{2}-1$, for the remaining values of $s\in (-\frac{d}{2},\frac{d}{2}-1)$
    would follow eventually by a simple interpolation argument.
    
  Now, observe that \eqref{Model} can be recast in the Fourier side as
   \begin{equation}\label{Main_Fourier}
\left\{\
\begin{aligned}
&\partial_t \widehat u+ |\xi|^2\widehat u+i\xi \widehat p
   =\widehat{j\times B}-\widehat{u\cdot\nabla u},\\
&\frac{1}{c} \partial_t \widehat E-i\xi\times \widehat B=-\widehat j,\\
&\frac{1}{c}\partial_t \widehat B+i\xi\times \widehat E=0,\\
&\xi\cdot \widehat u=\xi\cdot \widehat B=0,\\
&\widehat j=c\widehat E+\widehat{u\times B}.
\end{aligned}
\right.
\end{equation}
Taking the real part of the scalar product in $\mathbb{C}^d $ of the first three equations with   $ (\widehat u, \widehat E, \widehat B)$, respectively, yields that\begin{equation}\label{Fourier_Energy}
\begin{aligned}
\frac12\frac{d}{dt} |\widehat U|^2
+  |\xi|^2|\widehat u|^2
+|c\widehat E|^2
=I \bydef \sum_{\ell =1}^4 I_\ell  ,
\end{aligned}
\end{equation}
where
\begin{equation*}
  	U\bydef (u,E,B), 
  \end{equation*}  
and
\begin{equation*}
	\begin{aligned}
		I_1&=c
\operatorname{Re}
\left \langle
\widehat{E\times B},  \widehat u
\right \rangle,
\qquad I_2=
\operatorname{Re}
\left \langle
\widehat{(u\times B)\times B},
 \widehat u
\right \rangle,
\\
I_3&=-c
\operatorname{Re}
\left \langle
\widehat{u\times B},
 \widehat E
\right \rangle
, 
\qquad I_4=-
\operatorname{Re}
\left \langle
\widehat{u\cdot\nabla u},
\widehat u
\right \rangle.
	\end{aligned}
\end{equation*} 

It follows from the energy identity \eqref{Fourier_Energy} that, at first glance, only the components $\widehat{u}$ and $\widehat{E}$ of $\widehat{U}$ are directly dissipated. Nevertheless, owing to the coupling structure of the system \eqref{Main_Fourier}, the dissipation of these two components can be transferred to the $\widehat{B}$ component as well. To capture this hidden dissipation mechanism, we introduce the following auxiliary quantity
\begin{equation}\label{Functional_F}
F_c(t,\xi)
\bydef 
-\frac{1}{c} \mathrm{Re}\,\langle i\xi\times \widehat{E},\widehat{B}\rangle ,
\end{equation}
which will be the second part in the definition of the Lyapunov functional of our interest. 
Applying $(i\xi \times \cdot )$ to the second equation in \eqref{Main_Fourier} yields  
\begin{equation}\label{curl_2}
\frac{1}{c}\partial_t (i\xi\times \widehat{E})-i\xi\times(i\xi\times \widehat{B})=- i\xi\times\widehat{j}.
\end{equation}
By further taking the dot product with $-\widehat{B}$ we obtain  
\begin{eqnarray*}
-\frac{1}{c}\frac{d}{dt}\langle  (i\xi\times \widehat{E}),\widehat{B}\rangle+\frac{1}{c} \langle  (i\xi\times \widehat{E}),\partial_t\widehat{B}\rangle+\langle i\xi\times(i\xi\times \widehat{B}), \widehat{B}\rangle=c\langle i\xi\times\widehat{E},\widehat{B} \rangle+\langle i\xi\times\widehat{u\times B},\widehat{B} \rangle. 
\end{eqnarray*}
Note that, as a result of the identity $\xi\cdot \widehat{B}=0$, one has    $\xi\times(\xi\times \widehat{B})=-|\xi|^2\widehat{B}$. Therefore, combined with the third equation from \eqref{Main_Fourier}, the previous $\xi$-dependent ODE implies that 
\begin{eqnarray}\label{F_dt}
\frac{d}{dt}F_c(t,\xi)+|\xi|^2|\widehat{B}|^2=|\xi\times \widehat{E}|^2+c\mathrm{Re}\langle i\xi\times\widehat{E},\widehat{B} \rangle +\mathrm{Re}\langle i\xi\times\widehat{u\times B},\widehat{B} \rangle.
\end{eqnarray}
Thus, applying Cauchy-Schwarz inequality together with Young's inequality, implies, for any $\epsilon>0$,  that
\begin{equation*}
c|\mathrm{Re}\langle i\xi\times\widehat{E},\widehat{B} \rangle|+\mathrm{Re}\langle i\xi\times\widehat{u\times B},\widehat{B} \rangle\leq \epsilon |\xi|^2|\widehat{B}|^2+k(\epsilon)\left(|\widehat{cE}|^2+|\widehat{u\times B}|^2\right),
\end{equation*}
for some positive constant $k(\epsilon)>0$.
Plugging this     into \eqref{F_dt} leads (for $c\geq 1$) to the differential inequality 
\begin{equation}\label{B_dissipation_nonlinear}
\frac{d}{dt}F_c(t,\xi)
+
(1-\epsilon)|\xi|^2|\widehat B|^2
\leq
k(\epsilon)(1+|\xi|^2)|c\widehat E|^2
+
k(\epsilon)|\widehat{u\times B}|^2 .
\end{equation}
Now, we define the Lyapunov functional 
\begin{equation}\label{Lyapunov_F}
L_c(t,\xi)\bydef \frac{\gamma}{2} (1+|\xi|^2)|\widehat U(t,\xi)|^2+F_c(t,\xi),  
\end{equation}
where $\gamma$ is a large positive constants which will be determined, later on. 
Hence, combining  \eqref{Fourier_Energy} and \eqref{B_dissipation_nonlinear}, we obtain that 
\begin{equation}\label{L_dt_k}
\begin{aligned}
&\frac{d}{dt}L_c(t,\xi)+  \gamma(1+|\xi|^2)|\xi|^2| \widehat u|^2 +(\gamma-k(\epsilon))(1+|\xi|^2)|c\widehat E|^2+(1-\epsilon)|\xi|^2|\widehat B|^2\\
& \qquad \qquad \qquad\leq  k(\epsilon)|\widehat{u\times B}|^2+ (1+|\xi|^2)I(t,\xi ).
\end{aligned}
\end{equation}
Next, choosing $\epsilon$ small enough, at first, then making a choice for $\gamma$ large enough, we end up with the differential inequality  
\begin{equation}\label{L_dt_k_2}
\begin{aligned}
&\frac{d}{dt}L_c(t,\xi)+k_0\left((1+|\xi|^2)|\xi|^2|  \widehat u|^2
+(1+|\xi|^2)|c\widehat E|^2+|\xi|^2|\widehat B|^2\right) \\
& \qquad \qquad \qquad\lesssim |\widehat{u\times B}|^2+ (1+|\xi|^2)I(t,\xi ),
\end{aligned}
\end{equation}
for some universal constant $k_0>0$.
Hence, it follows that  
\begin{equation*} 
\begin{aligned}
&\frac{d}{dt}L_c(t,\xi)+\frac{k_0}{2} |\xi|^2|\widehat  U(t,\xi)|^2+\frac{k_0}{2}(1+|\xi|^2) ( |c\widehat E|^2 + |\xi|^2 |\widehat u|^2)\\
&\lesssim\, |\widehat{u\times B}|^2+ (1+|\xi|^2)I(t,\xi ). 
\end{aligned}
\end{equation*} 
Notice in passing that,   for $\gamma$ being taken large enough, it holds, for any $(t,\xi)\in \mathbb R^+\times \mathbb R^d$, that 
\begin{equation}\label{Equiv_E_k_L_k}
L_c(t,\xi)\sim (1+|\xi|^2) |\widehat U(t,\xi)|^2, 
\end{equation}
where the implicit constant is universal and does not depend on the speed of light, too.
We now take care of the source term $I(t,\xi)$. To that end, we employ Young inequalities  to estimate each summand in its definition. Accordingly,  we find that   
\begin{equation*}
	\begin{aligned}
		I &\leq  \frac{k_0}{4} \left(    |c \widehat E|^2 + |\xi|^2 |\widehat u|^2   \right)+ \frac{C(k_0)}{|\xi|^2}   \left( |c\widehat { E\times B} | ^2  + | \widehat {( u\times B) \times B}| ^2 + | \widehat { u\cdot \nabla u}| ^2 \right) 
		\\
		&\qquad + C(k_0) |\widehat { u\times B}|^2,
	\end{aligned}
\end{equation*}
for some constant $C(k_0)>0$.
Therefore, we end up with the $\xi$-dependent differential inequality   
\begin{equation}\label{L_dt_k_2}
\begin{aligned}
\frac{d}{dt}  \left( \frac{L_c(t,\xi)}{ 1+ |\xi|^2 } \right)   + b_0      \left( \frac{|\xi|^2}{ 1+ |\xi|^2 } \right)  \left( \frac{ L_c(t,\xi)}{1+|\xi|^2}\right)  
\leq  J(t,\xi),
\end{aligned}
\end{equation} 
for some $b_0>0$, where 
\begin{equation*}
	J(t,\xi) \bydef \frac{C(k_0)}{|\xi|^2}   \left( |c\widehat { E\times B} | ^2  + | \widehat {( u\times B) \times B}| ^2 + | \widehat { u\cdot \nabla u}| ^2 \right) + C(k_0) |\widehat { u\times B}|^2.
\end{equation*}
Introducing the time-dependent functions 
\begin{equation*}
	\mathcal{L}_c(t) \bydef \int_{\mathbb R^d} \left ( \frac{L_c(t,\xi)}{ 1+ |\xi|^2 } \right) |\xi|^{d-2}d\xi , \qquad\mathcal{D}_c (t) \bydef \int_{\mathbb R^d}  \left( \frac{|\xi|^2}{ 1+ |\xi|^2 } \right)  \left( \frac{ L_c(t,\xi)}{1+|\xi|^2}\right)  |\xi|^{d-2}d\xi ,
\end{equation*}
we find, upon integrating \eqref{L_dt_k_2}  against the measure $|\xi|^{d-2}d\xi $, that 
\begin{equation*}
 \begin{aligned}
 	\frac{d}{dt} \mathcal{L}_c(t) + b_0\mathcal{D}_c(t) 
 	&\leq   \int_{\mathbb R^d} J(t,\xi) |\xi|^{d-2}d\xi
 	\\
 	& \lesssim \norm {cE\times B}_{\dot H^{\frac{d}{2}-2}}^2 + \norm {( u\times B) \times B}_{\dot H^{\frac{d}{2}-2}}^2
 	\\
 	& \quad  + \norm { u\cdot \nabla u}_{\dot H^{\frac{d}{2}-2}}^2 + \norm { u\times B }_{\dot H^{\frac{d}{2}-1}}^2.
 \end{aligned}
\end{equation*}
Thus, by employing the product laws 
\begin{equation*}
	\dot H^{\frac{d}{2}-1} \cdot\dot H^{\frac{d}{2}-1}\hookrightarrow \dot H^{\frac{d}{2}-2}(\mathbb R^d) \qquad \text{ and } \qquad \dot H^{\frac{d}{2}-1} \cdot \dot B^{\frac{d}{2}}_{2,1} \hookrightarrow \dot H^{\frac{d}{2}-1} (\mathbb R^d),
\end{equation*}
together with the equivalence relation \eqref{Equiv_E_k_L_k}, we obtain 
\begin{equation*}
 \begin{aligned}
 	\frac{d}{dt} \mathcal{L}_c(t) + b_0\mathcal{D}_c(t) 
 	&\lesssim a(t) \norm {(u,E,B)(t)}_{\dot H^{\frac{d}{2}-1}}^2 \sim a(t)  \mathcal{L}_c(t),
 \end{aligned}
\end{equation*}
where, by Theorem \ref{thm:critical:wp},
\begin{equation*}
	t\mapsto a(t) \bydef \norm {cE(t)}_{\dot H^{\frac{d}{2}-1}}^2+ \norm { u(t)}_{\dot B^{\frac{d}{2}}_{2,1}}^2 \left (1+ \norm {B(t)}_{\dot H^{\frac{d}{2}-1}}^2 \right) \in L^1(\mathbb R^+).
\end{equation*} 

We are now in a position to adapt the Fourier splitting method to the last differential inequality above, and conclude the proof of the decay rate. To that end, we write, for 
\begin{equation*}
	\lambda (t) \bydef \left( \frac{2\eta }{b_0 (1+t)} \right)^\frac{1}{2} \qquad \text{ and } \qquad \eta >d-1,
\end{equation*}
that
\begin{equation*}
	\begin{aligned}
		\mathcal{D}_c (t)
		 &\geq    \int_{\{  |\xi| >\lambda (t) \}} \left ( \frac{L_c(t,\xi)}{ 1+ |\xi|^2 } \right) \left( 
		 \frac{|\xi|^2}{1+|\xi|^2} \right) |\xi|^{d-2}d\xi
		 \\
		&\geq    \frac{ \lambda^2(t) }{1+  \lambda^2(t) } \int_{\{  |\xi| >\lambda (t) \}} \left ( \frac{L_c(t,\xi)}{ 1+ |\xi|^2 } \right) |\xi|^{d-2}d\xi
		\\
		&=       \frac{ \lambda^2(t) }{1+  \lambda^2(t) } \mathcal{L}_c(t)-   \frac{ \lambda^2(t) }{1+  \lambda^2(t) }  \int_{\{  |\xi| \leq \lambda (t) \}} \left ( \frac{L_c(t,\xi)}{ 1+ |\xi|^2 } \right) |\xi|^{d-2}d\xi
		\\
		&\geq        \frac{ \lambda^2(t) }{1+  \lambda^2(t) }  \mathcal{L}_c(t)- C    \lambda^ { d}(t)    \int_{\{  |\xi| \leq \lambda (t) \}} |\widehat U(t)|^2d\xi
		\\
		&\geq       \frac{ \lambda^2(t) }{1+  \lambda^2(t) }  \mathcal{L}_c(t)-  C   \lambda^ { 2d}(t)   \norm { U(t)}_{\dot B_{2,\infty}^{-\frac{d}{2}}}^2,
	\end{aligned}
\end{equation*}
where we used the equivalence \eqref{Equiv_E_k_L_k}.
Therefore, for $\lambda(t)\leq   1$ (which is realized for sufficiently large $t$),
we obtain the lower bound
\begin{equation*}
	\mathcal D_c(t) \geq \frac{\lambda ^2(t)}{2} \mathcal L_c(t) - C   \lambda^ { 2d}(t)   \norm { U(t)}_{\dot B_{2,\infty}^{-\frac{d}{2}}}^2.
\end{equation*}
 By further employing Proposition \ref{Prop_negative_regularity} to control  $U$ in $L^\infty(\mathbb R^+; \dot B^{-\frac{d}{2}}_{2,\infty}(\mathbb R^d))$, we deduce that 
\begin{equation*}
	\frac{d}{dt} \mathcal{L}_c(t) + \frac{\eta }{ 1+t} \mathcal{L}_c(t) \leq a(t)  \mathcal{L}_c(t)  +C \left( 1+t \right) ^ {  -d  } ,
\end{equation*}
where $C>0$ depends on the initial data, only.
Multiplying by $(1+t)^\eta$   
\begin{equation*}
	\frac{d}{dt}\Big ( (1+t)^\eta \mathcal{L}_c(t)\Big ) \leq a(t) (1+t)^\eta \mathcal{L}_c(t)  +C \left( 1+t \right) ^ {\eta  -d   },
\end{equation*}
and applying Gronuwall's inequality, we find that 
\begin{equation*}
	(1+t)^\eta \mathcal{L}_c(t) \leq  \left(  \mathcal{L}_c(0)   + C (1+t)^ {\eta - d+1} 
	\right) \exp \left ( \int_{0}^\infty a(\tau) d\tau  \right).
\end{equation*} 
Once again, making use of  the equivalence relation \eqref{Equiv_E_k_L_k}, which in particular gives that 
\begin{equation*}
	\mathcal{L}_c(t) \sim \norm {U(t)}_{\dot H^{\frac{d}{2}-1}}^2,
\end{equation*}
we finally arrive at the  estimate for the decay rate   
\begin{equation*}
	  \norm {U(t)}_{\dot H^ {\frac{d}{2}-1}}^2  \lesssim   (1+t)^ {-\eta } +  (1+t)^ {-d +1 } \lesssim  (1+t)^ {-d +1 },
\end{equation*} 
thereby concluding the proof. 
   \end{proof}

\appendix

\section{On a Gr\"onwall-type inequality} 
We conclude by outlining the proof of a Gr\"onwall-type estimate for the solutions of an integral inequality of the form
\begin{equation}\label{GR:setup}
	f'(t) \leq bf(t_0) + \big(a(t) -a(t_0)\big) f(t), \quad \text{ for all } 0\leq t_0 \leq t,
\end{equation}
under suitable assumptions on the coefficient function $a$, where $b\geq 1$ is a universal constant.
\begin{lemma}\label{lemma:bootstrap} Let $T\in \mathbb R^+\cup \{ \infty\}$ and  assume that $a\in C_b([0,T); \mathbb R^+)$ is non-decreasing on the time interval $[0,T)$, with the convention 
\begin{equation*}
	a(T)\bydef \lim_{t\to T} a(t).
\end{equation*}
	Then, any solution of \eqref{GR:setup} on the time interval $[0,T)$ enjoys the bound
	\begin{equation*}
		f(t) \leq f(0) \exp \left( C_b   a(T)\right), \quad \text{ for all } t \in [0,T),
	\end{equation*}
	for some suitable universal constant $C_b>0$ depending only on $b$.
\end{lemma}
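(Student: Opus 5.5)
The plan is a slicing argument in time. The continuity and monotonicity of $a$ let us cut $[0,T)$ into finitely many pieces on which the increment of $a$ is small, so that the last term in \eqref{GR:setup} can be absorbed into the left-hand side. I read \eqref{GR:setup} the way it is used in the proof of Proposition \ref{Prop_negative_regularity}, namely as the integral inequality
\begin{equation*}
f(t)\leq b f(t_0)+\big(a(t)-a(t_0)\big)f(t),\qquad 0\le t_0\le t<T,
\end{equation*}
where $f$ plays the role of a quantity such as $\norm{B}_{L^\infty(t_0,t;\dot H^{-1})\cap L^2(t_0,t;L^2)}^2$, and $a(t)$ that of $C\norm{u}_{L^2(0,t;\dot B^{d/2}_{2,1})}^2$. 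This reading matches the sentence preceding the lemma, which calls \eqref{GR:setup} an integral inequality. I also use that $f$ is finite on $[0,T)$, which holds for the bootstrapped quantities there.

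\emph{Step 1 (slicing).} Since $a$ is continuous, non-decreasing and bounded by $a(T)<\infty$, I choose times $0=t_0<t_1<\dots<t_N\le T$ with $a(t_{k+1})-a(t_k)=\tfrac12$ for $k<N-1$, and with the last interval $[t_{N-1},T)$ satisfying $a(T)-a(t_{N-1})\le \tfrac12$. The number of pieces then obeys $N\le 2a(T)+1$.

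\emph{Step 2 (absorption on each slice).} For $t\in[t_k,t_{k+1}]$, I apply \eqref{GR:setup} with base point $t_k$. Since $a(t)-a(t_k)\le\tfrac12$, the term $(a(t)-a(t_k))f(t)$ is absorbed into the left-hand side, which gives $f(t)\le 2b\,f(t_k)$.

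\emph{Step 3 (iteration).} Iterating Step 2 over the slices gives $f(t)\le (2b)^{k+1}f(0)$ on the $k$-th slice. Hence, for all $t\in[0,T)$,
\begin{equation*}
f(t)\le (2b)^{N} f(0)\le 2b\,\exp\big(2\log(2b)\,a(T)\big)f(0).
\end{equation*}

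\emph{Main obstacle.} The hard point is matching the exact form $f(0)\exp(C_b a(T))$ in the statement. The argument above leaves a multiplicative constant $2b$ in front. This constant cannot be removed in general: when $a\equiv0$, \eqref{GR:setup} only yields $f\le b f(0)$, whereas the statement would require $f\le f(0)$. I would therefore prove the lemma in the form
\begin{equation*}
f(t)\le C_b\, f(0)\exp\big(C_b\,a(T)\big),
\end{equation*}
which I take as the intended meaning of the statement. This version is exactly what the applications in Proposition \ref{Prop_negative_regularity} need, since all constants there are generic. Under the reading with $f'(t)$ on the left, taking $t_0=t$ gives only $f'\le bf$, which is unbounded exponential growth in time, so the integral reading seems forced.
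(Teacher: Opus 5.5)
Your proof is correct and follows the paper's own argument: you slice $[0,T)$ into finitely many intervals on which $a$ increases by at most $\tfrac12$, absorb on each slice to get $f(t)\le 2b\,f(t_n)$, and iterate to $(2b)^N f(0)$. The paper also uses \eqref{GR:setup} as the integral inequality $f(t)\le bf(t_0)+(a(t)-a(t_0))f(t)$, despite the $f'$ in the display.

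Your added prefactor, giving $f(t)\le C_b f(0)\exp(C_b a(T))$, is justified. The paper's count $N\le 4a(T)$ holds only for $N\ge 2$, and in the single-slice case (for instance $a\equiv 0$, $b>1$) the stated bound $f(t)\le f(0)\exp(C_b a(T))$ genuinely fails. Your count $N\le 2a(T)+1$ covers all cases.
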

\begin{proof}
	Throughout the  proof, which  is based  on a bootstrap argument, we may assume that $T<\infty$. To proceed, we introduce the sequence 
	\begin{equation*}
		t_{n+1} \bydef \sup \left \{t\in [t_{n}, T):a(t) -a(t_{n}) < \frac{1}{2}\right \},
	\end{equation*}
	for any integer  $n\geq 0$, where $t_0=0$. Notice first that, by continuity of $a$, if $t_{n+1}<T$,   one necessarily has  
	\begin{equation*}
		a(t_{n+1})- a(t_{n})=\frac{1}{2}.
	\end{equation*}
	Moreover, as $T<\infty$, there exists $N\in \mathbb N$ such that $ t_N=T$, i.e., the subdivision of the interval 
	\begin{equation*}
		[0,T)= \bigcup_{n\in \mathbb N} [t_n, t_{n+1})
	\end{equation*}
	must be finite 
	\begin{equation}\label{subsdivision}
		[0,T)= \bigcup_{0\leq n\leq N-1} [t_n, t_{n+1}).
	\end{equation}	 
	Indeed, if the sequence $(t_n)_{n\in \mathbb N}$ were to be infinite, then one has, for any $n\in \mathbb N$, that  
	\begin{equation*}
		\frac{1}{2}= a(t_{n+1})- a(t_{n}),
	\end{equation*}
	thereby 
	\begin{equation*}
		\frac{k}{2}= \sum_{n=0}^{k-1} \left( a(t_{n+1})- a(t_{n})\right) = a(t_k) - a(0),
	\end{equation*}
	which entails that $$\lim_{t\to T}a(t)=\infty,$$
	thereby contradicting the assumption that $a\in C_b([0,T);\mathbb R^+)$.
	Thus, we deduce that the finite subdivision \eqref{subsdivision} holds, and that 
	\begin{equation*}
		a(T)- a(t_{N-1})\leq \frac{1}{2} , \qquad a(t_{n+1})- a(t_{n})=\frac{1}{2} ,
	\end{equation*}
	for all integer $0\leq n\leq N-2$. It is then readily seen that 
	\begin{equation*}
		\frac{N-1}{2}= a(T_{N-1})- a(0) \leq a(T) - a(0),
	\end{equation*} 
	which in turn provides the bound (for $N\geq 2$)
	\begin{equation}\label{N:bound}
		N \leq 4 a(T)   .
	\end{equation}
	Now, we estimate \eqref{GR:setup} on each subinterval $[t_n,t_{n+1}]$ to obtain 
	\begin{equation*}
		f(t) \leq b f(t_{n}) + \frac{1}{2} f(t), \quad \text{ for all } t\in [t_{n}, t_{n+1}],
	\end{equation*}
	whereby 
	\begin{equation*}
		f(t) \leq 2b f(t_{n}), \quad \text{ for all } t\in [t_{n}, t_{n+1}],
	\end{equation*}
	for all integers $0\leq n \leq N-1$. By induction, this leads to the bound 
	\begin{equation*}
		f(t) \leq (2b)^{N} f(0) , 
	\end{equation*}
	whence, by \eqref{N:bound}, we conclude that  
	\begin{equation*}
		f(t) \leq  e^{4 \log (2b) a(T)} f(0), 
	\end{equation*}
	 for all $t\in [0,T).$ This completes the proof of the lemma.
\end{proof}

\section*{acknowledgement} 

H. Houamed acknowledges support from the  BAGEP Award of the Science Academy, T\"urkiye.

\bibliographystyle{plain}
\bibliography{plasma.bib}

\end{document}